\newtheorem{lemma}{Lemma}
\newtheorem{corollary}{Corollary}
\newtheorem{theorem}{Theorem}
\newcommand{\Li}{\operatorname{Li}}
\newcommand{\Z}{\mathbb Z}
\newcommand{\N}{\mathbb N}
\newcommand{\Q}{\mathbb Q}
\newcommand{\R}{\mathbb{R}}
\newcommand{\C}{\mathbb{C}}
\newcommand{\F}{\mathbb{F}}
\newcommand{\av}{\underline{a}}
\newcommand{\bv}{\underline{b}}
\newcommand{\nv}{\underline{n}}
\newcommand{\xv}{\underline{x}}
\newcommand{\Tv}{\underline{T}}
\newcommand{\chiv}{\underline{\chi}}
\newcommand{\alphav}{\underline{\alpha}}
\newcommand{\zv}{\underline{0}}
\newcommand{\Mod}[1]{\ (\text{mod}\ #1)}
\begin{document}

%%%%% To ease editing, for IMPAN journals add:

\baselineskip=17pt

%%%%%%%%%%%

%% In the running head, replace first names by initials 
%% and give an abbreviation of the title.

\title{Average $r$--rank Artin's Conjecture}

\author[L. Menici]{Lorenzo Menici}
\address{Dipartimento di Matematica, Universit\`a Roma Tre,\\
         Largo S. L. Murialdo, 1, I--00146 Roma Italia}
\email{menici@mat.uniroma3.it}

\author[C. Pehlivan]{Cihan Pehlivan}
\address{Dipartimento di Matematica, Universit\`a Roma Tre,\\
         Largo S. L. Murialdo, 1, I--00146 Roma Italia}
\email{cihanp@gmail.com}

\date{}

\begin{abstract}
Let $\Gamma\subset\Q^*$ be a finitely generated subgroup and let $p$ be a prime such that the reduction
group $\Gamma_p$ is a well defined subgroup of the multiplicative group $\F_p^*$. We prove an asymptotic formula for the average
of the number of primes $p\le x$ for which the index  $[\F_p^*:\Gamma_p]=m$.
The average is performed over all finitely generated subgroups $\Gamma=\langle a_1,\dots,a_r \rangle\subset\Q^*$,
with $a_i\in\Z$ and $a_i\le T_i$, with a range of uniformity $T_i>\exp(4(\log x \log\log x)^{\frac{1}{2}})$ for every $i=1,\dots,r$.
We also prove an asymptotic formula for the mean square of the error terms in the asymptotic formula with a similar range
of uniformity. The case of rank $1$ and $m=1$ corresponds to the classical Artin's conjecture for primitive roots
and has already been considered by Stephens in 1969.
\end{abstract}

\subjclass[2010]{Primary 11R45; Secondary 11N69, 11A07, 11L40}

\keywords{Artin's conjecture, primitive roots}

\maketitle

\section{Introduction}

Artin's conjecture for primitive roots (1927) states that for any integer $a\neq0,\pm 1$ which is not a perfect square there exist
infinitely many prime numbers $p$ for which $a$ is a primitive root modulo $p$.
In particular, Artin conjectured that the number of primes not exceeding $x$ for which $a$ is a primitive root, $N_a(x)$,
asymptotically satisfies
$$
N_a(x)\sim A(a) \Li(x)\;,\qquad\text{as }x\rightarrow\infty,
$$
where $\Li(x)$ is the logarithmic integral and the positive constant $A(a)$ depends on the integer $a$.
A breakthrough in this area has been achieved by Hooley's paper \cite{H} in which Artin's conjecture has been proved under the assumption
of the Generalized Riemann Hypothesis (GRH) for the Dedekind zeta function over the Kummer extension $\Q(a^{1/k},\zeta_k)$
for any positive square-free integer $k$.
Several generalizations of the original Artin's conjecture have been studied by many authors during the following years
(for an exhaustive survey see \cite{M}).
A first unconditional result on Artin's conjecture in the 3--rank case was found by Gupta and Ram Murty \cite{GM}, 
improved few years later by Heath-Brown \cite{HB}.

In the case of rank $r=1$, a first study of the average behavior of $N_a(x)$ was proposed by Stephens \cite{St} in 1969:
he proved that, if $T>\exp(4(\log x \log\log x)^{1/2})$, then
\begin{equation}\label{ste1}
\frac1{T}\sum_{a\leq T}N_a(x) = \sum_{p\leq x} \frac{\varphi(p-1)}{p-1} + O\left(\frac{x}{(\log x)^D}\right) 
=A \Li(x)+O\left(\frac{x}{(\log x)^D}\right)\;,
\end{equation}
where $\varphi$ is the Euler totient function, $A=\prod_p \left(1-\frac1{p(p-1)}\right)$ is the Artin's constant and 
$D$ is an arbitrary constant greater than $1$.
If $T>\exp(6(\log x \log\log x)^{1/2})$, Stephens also proved that
\begin{equation}\label{ste2}
\frac1{T}\sum_{a\leq T}\left\{N_a(x)-A\Li(x)\right\}^2 \ll \frac{x^2}{(\log x)^{D'}}\;,
\end{equation}
for any constant $D'>2$.
In 1976, Stephens refined his results with different methods \cite{St2}, getting both the asymptotic bounds (\ref{ste1}) and (\ref{ste2}) under the weaker assumption $T>\exp(C(\log x )^{1/2})$, with $C$ positive constant.

If we set, for any $a\in\N\setminus\{0,\pm1\}$ and $m\in\N$, $N_{a,m}(x)$ to be the number of primes $p\equiv1\Mod m$ not exceeding $x$ 
such that %$a$ generates the $m$--powers modulo $p$ (i.e. 
the index $[\F_p^*:\langle a\Mod p\rangle]=m$, then for $T>\exp(4(\log x \log\log x)^{1/2})$ Moree \cite{M2} showed that
\begin{equation}\label{moree}
\frac1{T}\sum_{a\leq T}N_{a,m}(x) = \sum_{\substack{p\le x\\ p\equiv1\Mod m}}
\frac{\varphi((p-1)/m)}{p-1}+O\left(\frac{x}{(\log x)^E}\right)\;,
\end{equation}
for any constant $E>1$.

%Calling $t=[\F_p^*:<g\mod{p}>]$ for any $g\in\Q\setminus\{0,\pm1\}$ with $p$-adic valuation $v_p(g)\neq 0$, Moree showed in \cite{M2}
%that exists a function $w_{g,t}(p)\in\{0,1,2\}$ such that, under GRH,
%$$
%N_{g,t}(x) = (h,t) \sum_{\substack{p\leq x \\ p\equiv 1 \Mod{p} }}w_{g,t}(p) \frac{\varphi((p-1)/t}{p-1}+
%O\left(\frac{x\log\log x}{(\log x)^2}\right)\;,
%$$
%where $h=\max_{h\in\Z}\{g=\pm g_0^h, g_0\in \Q\}$.

In the present work, we will discuss the average version of the \emph{$r$--rank Artin's quasi primitive root conjecture}, 
adapting the methods used by Stephens in \cite{St} to the case of rank $r$.
Let $\Gamma\subset\Q^*$ be a multiplicative subgroup of finite rank $r$. For almost all primes, namely those primes
$p$ such that for all $g\in\Gamma$ the $p$--adic valuation $v_p(g)=0$, one can consider the reduction group
$$
\Gamma_p=\{g\Mod p: g\in\Gamma\}
$$ which is a  well defined subgroup of the multiplicative group $\F_p^*$.
We denote by $N_{\Gamma,m}(x)$ the number of primes   $p\equiv1\Mod m$ not exceeding $x$ for which the index $[\F_p^*:\Gamma_p]=m$. 
It was proven by Cangelmi, Pappalardi and Susa (\cite{P}, \cite{CP} and \cite{PS1}), 
assuming the GRH for $\Q(\zeta_k,\Gamma^{1/k})$ for any natural number $k$,
that for any $\varepsilon>0$, if $m\le x^{\frac{r-1}{(r+1)(4r+2)}-\varepsilon}$, then
$$N_{\Gamma,m}(x)=\left(\delta_{\Gamma}^m+O\left(\frac{1}{\varphi(m^{r+1}\log^r x}\right)\right)\Li(x),\qquad\text{as }x\rightarrow\infty,$$
where $\delta_{\Gamma}^m$ is a rational multiple of
$$
C_r=\sum_{n\geq 1}\frac{\mu(n)}{n^r\varphi(n)}=\prod_p\left(1-\frac{1}{p^r(p-1)}\right)\;.
$$
%which we will refer to as the \emph{$r$--th Naive Primitive Roots Density}.

Here we restrict ourselves to studying subgroups $\Gamma=\langle a_1,\cdots,a_r\rangle$, with $a_i \in \Z$ for all $i=1,\ldots,r$,
and we prove the following Theorems:
\begin{theorem}\label{firstorder}
%Let $D$ be a positive constant.
Assume $T^*:=\min\{T_i:i=1,\dots,r\}>\exp(4(\log x\log\log x)^{\frac12})$ and $m\le(\log x)^D$ for an arbitrary positive constant $D$.
Then
$$
\frac{1}{T_1\cdots T_r}\sum_{\substack{a_i\in\Z\\ 0<a_1\le T_1\\ \vdots \\ 0<a_r\le T_r}} N_{\langle a_1,\cdots,a_r\rangle,m}(x)=
C_{r,m}\operatorname{Li}(x)+O\left(\frac x{(\log x)^M}\right)\;,
$$
where $C_{r,m}=\sum_{n\geq 1}\frac{\mu(n)}{(nm)^r\varphi(nm)}$ and $M>1$ is arbitrarily large.
\end{theorem}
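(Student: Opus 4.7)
The plan is to extend Stephens' 1969 method to rank $r$, combining Möbius inversion, multiplicative character sum estimates, and the prime number theorem in arithmetic progressions (Siegel--Walfisz). First I would express the index condition as a Möbius sum: since $\F_p^*$ is cyclic, $[\F_p^*:\Gamma_p]=m$ is equivalent to $p\equiv 1\pmod m$ together with $\Gamma_p\subseteq (\F_p^*)^m$ but $\Gamma_p\not\subseteq (\F_p^*)^{qm}$ for every prime $q\mid (p-1)/m$. Inclusion--exclusion gives
$$
N_{\langle a_1,\ldots,a_r\rangle, m}(x)=\sum_{\substack{p\le x\\ p\equiv 1\Mod m}}\sum_{d\mid (p-1)/m}\mu(d)\prod_{i=1}^r \mathbf 1\bigl[a_i\in (\F_p^*)^{dm}\bigr].
$$
Averaging over $a_i\le T_i$ and swapping the order of summation turns the inner indicator into the counting function $F_i(p,dm,T_i):=\#\{a\le T_i:a\in (\F_p^*)^{dm}\}$, expanded via the orthogonality relation $\mathbf 1[a\in (\F_p^*)^{dm}]=\frac{1}{dm}\sum_{\chi^{dm}=\chi_0}\chi(a)$ when $dm\mid p-1$.

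Next I would truncate the Möbius sum at some $y$ (say a small power of $\log x$). For $d\le y$, separate the trivial character to write $F_i(p,dm,T_i)=T_i/(dm)+E_i$, where $E_i$ is a sum over nontrivial characters of order dividing $dm$. Expanding the product $\prod_{i=1}^r(T_i/(dm)+E_i)$ produces a main term $T_1\cdots T_r/(dm)^r$ together with $2^r-1$ cross terms. Summing the main term over $p\le x$ with $p\equiv 1\pmod{dm}$ and applying Siegel--Walfisz uniformly in $dm$ yields
$$
\Li(x)\sum_{d\le y}\frac{\mu(d)}{(dm)^r\varphi(dm)}+O\!\left(\frac{x}{(\log x)^M}\right),
$$
and extending the $d$-sum to infinity costs only $O(1/(my)^r)$, producing the advertised constant $C_{r,m}$. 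For $d>y$ I would use the trivial bound $F_i\le (p-1)/(dm)+1$ together with the divisibility constraint $dm\mid p-1$ to absorb the tail.

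The main obstacle is controlling the cross terms. Unlike the rank-$1$ case, we must bound products of up to $r$ character sums over intervals $[1,T_i]$, summed over primes $p\le x$ and divisors $d\le y$ of $(p-1)/m$, and divided by $T_1\cdots T_r$. A direct application of Polya--Vinogradov, which gives $|\sum_{a\le T}\chi(a)|\ll \sqrt{p}\log p$, is insufficient because $T^*$ can be much smaller than $\sqrt x$. The key is Stephens' trick of combining individual Polya--Vinogradov bounds with a second-moment estimate $\sum_{\chi\neq \chi_0\Mod p}|\sum_{a\le T}\chi(a)|^2\ll pT$, which allows one to sum over nontrivial characters of $dm$--th roots of unity and over primes $p\le x$ simultaneously. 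Balancing the resulting bounds against $T_1\cdots T_r$ reproduces exactly the uniformity threshold $T^*>\exp(4(\log x\log\log x)^{1/2})$, with the factor $4$ arising from optimising the choice of $y$ against the character sum contribution.

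Finally, the hypothesis $m\le (\log x)^D$ is used to guarantee that the Siegel--Walfisz estimate applies uniformly to moduli $dm$ with $d\le y$, and that the tail $\sum_{d>y}1/((dm)^r\varphi(dm))$ is negligible compared with $(\log x)^{-M}$. A clean bookkeeping of the $2^r$ cross-term contributions, together with the standard extension of the convergent Dirichlet series defining $C_{r,m}$, then assembles into the stated asymptotic.
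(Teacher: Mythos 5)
Your opening reduction is sound and is in substance the same as the paper's: the M\"obius sum over $d\mid(p-1)/m$ of the indicators $\mathbf 1[a_i\in(\F_p^*)^{dm}]$, expanded by orthogonality, is exactly the paper's identity $t_{p,m}(\av)=\sum_{\chiv}c_m(\chiv)\chiv(\av)$ with the coefficients $c_m(\chiv)$ evaluated via the multiple Ramanujan sum, and your main-term computation (Siegel--Walfisz for moduli $dm\le(\log x)^{B+D}$ plus the tail of the convergent series) matches Lemma~\ref{MainTerm}.

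The gap is in the error term, which is the heart of the proof. The tools you name --- P\'olya--Vinogradov together with the second-moment bound $\sum_{\chi\ne\chi_0}|\sum_{a\le T}\chi(a)|^{2}\ll pT$ --- are quantitatively insufficient for the claimed range of $T^*$. Cauchy--Schwarz against the second moment yields a normalized error of order roughly $x^{3/2}(\log\log x)^{1/2}(T^*)^{-1/2}$ when $T^*\le x$, which is admissible only when $T^*$ is essentially a power of $x$; and P\'olya--Vinogradov loses a factor $\sqrt p$ that cannot be recovered when $T^*$ is subpolynomial in $x$. What Stephens actually uses, and what the paper uses, is the high-moment large-sieve estimate
$$
\sum_{p\le x}\ \sum_{\chi\ne\chi_0\Mod p}\left|\sum_{a\le T}\chi(a)\right|^{2s}\ll(x^{2}+T^{s})\,T^{s}\,(\log(eT^{s-1}))^{s^{2}-1},
$$
applied through H\"older's inequality with exponent $2s_i$ and the choice $s_i=\lfloor 2\log x/\log T_i\rfloor+1$, together with the bounds on $d_m(\chi)$ and on $\sum_{p\le x,\ p\equiv1\Mod m}\tau((p-1)/m)$ (Lemmas~\ref{const} and~\ref{tau_sum}) to control the H\"older-dual factor. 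The threshold $T^*>\exp(4(\log x\log\log x)^{1/2})$ arises precisely from beating the factor $(\log x)^{(s_i^{2}-1)/(2s_i)}\approx\exp(\log x\log\log x/\log T_i)$ coming from this lemma against the saving $x^{-1/(2s_i)}\approx (T_i)^{-1/4}$; it has nothing to do with the truncation point $y$ of the M\"obius sum, which affects only the main term and can be any fixed power of $\log x$. As written, your argument does not reach the stated uniformity, and the missing ingredient is this $2s$-th moment mechanism.
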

\begin{theorem}\label{secondorder}
Let $T^*>\exp(6(\log x\log\log x)^{\frac12})$ and $m\le(\log x)^D$ for an arbitrary positive constant $D$. Then
$$
\frac{1}{T_1\cdots T_r}\sum_{\substack{a_i\in\Z\\ 0<a_1\le T_1\\ \vdots \\ 0<a_r\le T_r}}\left\{N_{\langle a_1,\cdots,a_r\rangle,m}(x)-
\,C_{r,m}\operatorname{Li}(x)\right\}^2\ll\frac{x^2}{(\log x)^{M'}}
$$
where $M'>2$ is arbitrarily large.
\end{theorem}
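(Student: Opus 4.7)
The approach mirrors Stephens's treatment of the rank $1$ case \cite{St}, promoting the key character-sum estimate to the product structure of rank $r$ and using Theorem~\ref{firstorder} to reduce the mean square to the diagonal of a second moment. Set $\mathcal{T}=T_1\cdots T_r$ and $T^{*}=\min_i T_i$. Expanding the square yields
\[
\sum_{\underline{a}}\bigl(N_{\langle\underline{a}\rangle,m}(x)-C_{r,m}\Li(x)\bigr)^{2}
=\sum_{\underline{a}}N_{\langle\underline{a}\rangle,m}(x)^{2}
-2C_{r,m}\Li(x)\sum_{\underline{a}}N_{\langle\underline{a}\rangle,m}(x)
+\mathcal{T}\bigl(C_{r,m}\Li(x)\bigr)^{2}.
\]
Theorem~\ref{firstorder} already controls the linear sum with error $O(\mathcal{T}x/(\log x)^{M})$, so the whole statement reduces to showing
\[
\sum_{\underline{a}}N_{\langle\underline{a}\rangle,m}(x)^{2}=\mathcal{T}\bigl(C_{r,m}\Li(x)\bigr)^{2}+O\!\left(\mathcal{T}\,x^{2}/(\log x)^{M'}\right).
\]

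For a prime $p\equiv 1\Mod m$ coprime to $a_1\cdots a_r$, Möbius inversion over the divisibility $mn\mid[\F_p^{*}:\Gamma_p]$ gives
\[
\mathbf{1}\!\bigl[[\F_p^{*}:\Gamma_p]=m\bigr]
=\sum_{\substack{n\ge 1\\ mn\mid p-1}}\mu(n)\prod_{i=1}^{r}\mathbf{1}\!\bigl[a_i\in(\F_p^{*})^{mn}\bigr].
\]
Squaring $N_{\langle\underline{a}\rangle,m}(x)$ and swapping summations turns the task into estimating, for each $(p_1,p_2,n_1,n_2)$ with $mn_j\mid p_j-1$, the product $\prod_{i=1}^{r}S_i$ where
\[
S_i(p_1,p_2,n_1,n_2)=\#\!\left\{a\le T_i:\ a\in(\F_{p_1}^{*})^{mn_1}\cap(\F_{p_2}^{*})^{mn_2}\right\}.
\]
Expanding each power-residue indicator via the characters of $\F_{p_j}^{*}$ of order dividing $mn_j$,
\[
S_i=\frac{T_i}{m^{2}n_1n_2}+\frac{1}{m^{2}n_1n_2}\sum_{(\chi_1,\chi_2)\ne(1,1)}\sum_{a\le T_i}\chi_1(a)\chi_2(a).
\]

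If $p_1\ne p_2$ the product $\chi_1\chi_2$ is a non-principal character modulo $p_1p_2$, so Pólya--Vinogradov yields $\sum_{a\le T_i}\chi_1(a)\chi_2(a)\ll\sqrt{p_1p_2}\log(p_1p_2)$. Expanding $\prod_{i}S_i$ and isolating the all-principal term, each of the remaining $2^{r}-1$ mixed terms is at most
\[
\mathcal{T}\cdot\frac{\bigl(\sqrt{p_1p_2}\log(p_1p_2)\bigr)^{k}}{T_{i_1}\cdots T_{i_k}\,(m^{2}n_1n_2)^{r-k}}
\]
for some $k\in\{1,\dots,r\}$ and indices $i_1,\dots,i_k$. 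On the diagonal $p_1=p_2=p$ the two conditions collapse into $a\in(\F_p^{*})^{m\,\mathrm{lcm}(n_1,n_2)}$ and a single Pólya--Vinogradov bound of size $\sqrt p\log p$ applies; this diagonal produces the main term $\mathcal{T}(C_{r,m}\Li(x))^{2}$ together with errors of the same shape, which will exactly cancel the surplus term in the opening identity. Following \cite{St} I would truncate the Möbius variables at a level $\xi=\xi(x)$, handling the tail $n_j>\xi$ by the trivial bound $S_i\ll T_i/(m^{2}n_1n_2)+1$ and the Brun--Titchmarsh count of primes $p\equiv 1\Mod{mn}$.

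The main obstacle is calibrating $\xi$ and summing the mixed off-diagonal terms over $p_1,p_2\le x$ and $n_1,n_2\le\xi$ so that the total contribution stays below $\mathcal{T}x^{2}/(\log x)^{M'}$. The rank $r$ structure actually helps: every mixed term carries at least one missing factor $T_{i}/(m^{2}n_1n_2)$, so raising $r$ only improves the error per pair $(p_1,p_2)$. The usual tension between the truncation tail (demanding large $\xi$) and the Pólya--Vinogradov error (demanding small $\xi$) is what forces the hypothesis $T^{*}>\exp(6(\log x\log\log x)^{1/2})$, exactly one $\sqrt{\log x\log\log x}$ worse than in Theorem~\ref{firstorder} and mirroring Stephens's $6$ versus $4$ dichotomy in the rank $1$ case.
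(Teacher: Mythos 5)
There is a genuine gap, and it is fatal to the proposed route: the off-diagonal character sums cannot be handled by P\'olya--Vinogradov in the stated range of uniformity. For $p_1\neq p_2$ the modulus of $\chi_1\chi_2$ is $p_1p_2$, which can be as large as $x^2$, so P\'olya--Vinogradov gives $\sum_{a\le T_i}\chi_1(a)\chi_2(a)\ll\sqrt{p_1p_2}\log(p_1p_2)\ll x\log x$. This beats the trivial bound $T_i$ only when $T_i\gg x\log x$, whereas the theorem allows $T_i$ as small as $\exp(6(\log x\log\log x)^{1/2})$, which is smaller than any fixed power of $x$. Even granting the bound, the cheapest mixed term ($k=1$) contributes roughly $\mathcal{T}\sqrt{p_1p_2}\log(p_1p_2)/T^{*}$ per pair of primes, and summing over the $\asymp(x/\log x)^2$ pairs gives $\mathcal{T}x^{3}/(T^{*}\log x)$, which is $\ll\mathcal{T}x^{2}/(\log x)^{M'}$ only if $T^{*}\gg x(\log x)^{M'-1}$. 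No choice of the truncation parameter $\xi$ repairs this, because the loss occurs already at $n_1=n_2=1$. Your closing remark that the threshold $\exp(6(\log x\log\log x)^{1/2})$ arises from a tension between the truncation tail and P\'olya--Vinogradov is therefore not correct: that threshold has a different origin.

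The missing idea --- which is the heart of both Stephens's rank-one argument and the paper's proof --- is to avoid bounding individual character sums and instead apply H\"older's inequality with a large even exponent $2s_i$, pairing a moment of the coefficients $d_m(\chi)$ (controlled via Lemma~\ref{const}, Lemma~\ref{tau_sum} and the Brun--Titchmarsh theorem) against the large sieve estimate
$$
\sum_{p\le x}\ \sum_{\chi\neq\chi_0}\Bigl|\sum_{0<a\le T_i}\chi(a)\Bigr|^{2s_i}\ll (x^{2}+T_i^{s_i})\,T_i^{s_i}\,(\log(eT_i^{s_i-1}))^{s_i^{2}-1},
$$
choosing $s_i=\lfloor 4\log x/\log T_i\rfloor+1$ for the genuinely bilinear (both-nonprincipal, same coordinate) part $H_{3,i}$, where the characters $\chi_{1,i}\chi_{2,i}$ run over primitive characters modulo $pq$. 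The requirement $\log T_i\gg(\log x\log\log x)^{1/2}$ (with constant $6$ rather than $4$ because $s_i$ is roughly doubled relative to Theorem~\ref{firstorder}) is exactly what is needed to absorb the factor $(\log(ex^{4}))^{(s_i^{2}-1)/(2s_i)}=\exp(O(s_i\log\log x))$. A secondary but real error in your plan: the main term $\mathcal{T}(C_{r,m}\Li(x))^{2}$ does not come from the diagonal $p_1=p_2$ (that diagonal contributes only $O(\mathcal{T}\Li(x))$, since each prime contributes at most $\mathcal{T}$ tuples); it comes from the off-diagonal pairs $p\neq q$ with both character tuples principal, evaluated as $\mathcal{T}S_m(x)^2$ via Lemma~\ref{MainTerm}. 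The overall skeleton (expand the square, invoke Theorem~\ref{firstorder} for the linear term, separate principal from non-principal characters) does match the paper, but the estimation of the non-principal contributions needs to be rebuilt on the H\"older--large-sieve mechanism rather than on P\'olya--Vinogradov.
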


Notice that, since $\varphi(mn)=\varphi(m)\varphi(n)\gcd(m,n)/\varphi(\gcd(m,n))$ and $\gcd(m,n)$ is a multiplicative function of $n$
 for any fixed integer $m$, we have the following Euler product expansion:
\begin{eqnarray*}
C_{r,m}&=&\frac1{m^{r}\varphi(m)}\sum_{n\geq 1}\frac{\mu(n)}{n^{r}\varphi(n)}\prod_{p\mid \gcd(m,n)}\left(1-\frac1{p}\right)\\
%\cdot\frac{nm}{\varphi(nm)}
&&=\frac1{m^{r+1}}\prod_{p\mid m}\left(1-\frac{p}{p^{r+1}-1}\right)^{-1} C_r\;.
\end{eqnarray*}

The results found in the present paper (see in particular equation (\ref{Smx}) and Lemma~\ref{MainTerm}) will lead as a side product to the asymptotic identity \begin{equation}\nonumber %\label{generalize_moree}
\frac{1}{T_1\cdots T_r}\sum_{\substack{a_i\in\Z\\ 0<a_1\le T_1\\ \vdots \\ 0<a_r\le T_r}} N_{\langle a_1,\cdots,a_r\rangle,m}(x)=
\sum_{\substack{p\le x\\ p\equiv1\Mod m}}
\frac{J_r((p-1)/m)}{(p-1)^r}+O\left(\frac{x}{(\log x)^M}\right),
\end{equation}
if $T_i>\exp(4(\log x\log\log x)^{\frac12})$ for all $i=1,\ldots,r$, $m\le(\log x)^D$ and $M>1$ arbitrary constant,
where $$J_r(n)=n^r\prod_{\substack{\ell\mid n \\ \ell\; \rm{prime}}}\left(1-\frac1{\ell^r}\right)$$ 
is the so called \emph{Jordan's totient function}. 
This provides a natural generalization of Moree's result in \cite{M2}.

Theorem~\ref{secondorder} leads to the following Corollary:
\begin{corollary}\label{exceptions}
For any $\epsilon>0$, let $$\mathcal{H}:=\{\av\in \Z^r: 0<a_i\leq T_i, i\in\{1,\dots,r\}, |N_{\av,m}(x)-C_{r,m}\Li(x)|>\epsilon\Li(x) \}\;;$$
then, supposing $T^*>\exp(6(\log x \log\log x)^{1/2})$, we have $\#\mathcal{H} \leq K|\Tv|/\epsilon^2(\log x)^F$,
for every positive constant $F$.
\end{corollary}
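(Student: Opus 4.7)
The plan is to deduce Corollary~\ref{exceptions} from Theorem~\ref{secondorder} by a direct application of Chebyshev's inequality. The key observation is that the set $\mathcal{H}$ measures precisely those tuples whose squared deviation exceeds $\epsilon^2 \Li(x)^2$, so any upper bound on the total second moment immediately translates into an upper bound on $\#\mathcal{H}$.

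Concretely, I would first write, trivially,
\[
\sum_{\av \in \mathcal{H}} \left(N_{\av,m}(x) - C_{r,m}\Li(x)\right)^2
\le \sum_{\substack{a_i \in \Z \\ 0 < a_i \le T_i}} \left(N_{\av,m}(x) - C_{r,m}\Li(x)\right)^2.
\]
Since $T^* > \exp(6(\log x \log\log x)^{1/2})$ by hypothesis, Theorem~\ref{secondorder} applies and the right-hand side is $\ll T_1 \cdots T_r \cdot x^2/(\log x)^{M'}$ for any arbitrarily large $M' > 2$. On the other hand, by the very definition of $\mathcal{H}$, each summand on the left is strictly greater than $\epsilon^2 \Li(x)^2$, so the left-hand side is bounded below by $\#\mathcal{H} \cdot \epsilon^2 \Li(x)^2$.

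Combining these two bounds and using $\Li(x) \gg x/\log x$, I obtain
\[
\#\mathcal{H} \ll \frac{T_1 \cdots T_r \cdot x^2}{\epsilon^2 \Li(x)^2 (\log x)^{M'}}
\ll \frac{|\Tv|}{\epsilon^2 (\log x)^{M'-2}}.
\]
Since $M'$ can be taken arbitrarily large, so can $F := M' - 2$, which yields the claimed estimate $\#\mathcal{H} \le K |\Tv|/(\epsilon^2 (\log x)^F)$ for any positive constant $F$.

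Essentially there is no serious obstacle: the proof is the standard Chebyshev-type transition from a mean-square estimate to a bound on the number of exceptionally large deviations. The only minor bookkeeping is to verify that the loss of $(\log x)^2$ coming from $\Li(x)^2 \asymp x^2/(\log x)^2$ is harmless, since the exponent $M'$ in Theorem~\ref{secondorder} is already allowed to be arbitrary.
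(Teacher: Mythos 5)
Your Chebyshev-type argument is correct and is exactly the approach the paper intends: the authors simply cite the corresponding corollary in Stephens (page 187), whose proof is the same transition from the mean-square bound of Theorem~\ref{secondorder} to a count of exceptional tuples, with the harmless $(\log x)^2$ loss from $\Li(x)^2\asymp x^2/(\log x)^2$ absorbed by taking $M'$ arbitrarily large.
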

\begin{proof}[Proof of Corollary \ref{exceptions}]
The proof of this Corollary is a trivial generalization of that in \cite{St} (Corollary, page 187).
\end{proof}

%%%%%%%%%%%%%%%%%%%%%%%%%%%%%%%%%%%%%%%%%%%%%%%%%%%%%%%%%%%%%%%%%%%%%%%%%%%%%%

\section{Notations and conventions}
 In order to simplify the formulas, we introduce the following notations.
Underlined letters stand for general $r$-tuples defined within some set, e.g. $\av=(a_1,\dots,a_r) \in (\F_p^*)^r$ or
$\Tv=(T_1,\dots,T_r) \in (\R^{>0})^r$; moreover, given two $r$-tuples, $\av$ and $\nv$, their scalar product is
$\av \cdot \nv = a_1 n_1 + \dots + a_r n_r$. The null vector is $\underline{0} = \{0,\dots,0\}$.
Similarly, $\chiv=(\chi_1,\dots,\chi_r)$ is a $r$-tuple of Dirichlet characters
and, given $\av \in \Z^r$, we denote the product $\chiv(\av) = \chi_1(a_1)\cdots\chi_r(a_r) \in \C$.

In addition, $(q,\av):=(q,a_1,\dots,a_r)=\gcd(q,a_1,\dots,a_r)$; otherwise, to avoid possible misinterpretations,
we will write explicitly $\gcd(n_1,\dots,n_r)$ instead of $(\nv)$. Given any $r$-tuple $\av \in \Z^r$, we indicate with $$\langle \av \rangle_p:=\langle a_1\Mod{p},\dots,a_r\Mod{p}\rangle$$
the reduction modulo $p$ of the subgroup $\langle \av \rangle = \langle a_1,\dots,a_r \rangle \subset \Q$; 
if $\Gamma= \langle a_1,\dots,a_r\rangle$, then $\Gamma_p=\langle \av \rangle_p$.

In the whole paper, $\ell$ and $p$ will always indicate prime numbers. Given a finite field $\F_p$, then $\F_p^*=\F_p\setminus\{0\}$
and $\widehat{\F_p^*}$ will denote its relative dual group (or character group).
Finally, given an integer $a$, $v_p(a)$ is its $p$-adic valuation.

%%%%%%%%%%%%%%%%%%%%%%%%%%%%%%%%%%%%%%%%%%%%%%%%%%%%%%%%%%%%%%%%%%%%%%%%%%%%%%%%

\section{Lemmata}

Let $q>1$ be an integer and let $\nv\in\Z^r$. We define the \textit{multiple Ramanujan sum} as
$$
c_q(\nv):=\sum_{\substack{\av\in(\Z/q\Z)^r\\ (q,\av)=1}}e^{2\pi i \av \cdot \nv /q}\;.
$$
It is well known (see \cite[Theorem~272]{HW}) that, given any integer $n$,
\begin{equation}\label{uno}
c_q(n)= \mu\left(\frac{q}{(q, n)}\right)\frac{\varphi(q)}{\varphi\left(\frac{q}{(q, n)}\right)}.
\end{equation}
In the following Lemma, we generalize the previous result.
\begin{lemma} \label{MRS}
Let $$
J_r(m):=m^r\prod_{\ell\mid m}\left(1-\frac1{\ell^r}\right)
$$
be the Jordan's totient function, then
$$
c_q(\nv)=\mu\left(\frac{q}{(q,\nv)}\right)\frac{J_r(q)}{J_r\left(\frac{q}{(q,\nv)}\right)}\;.
$$
\end{lemma}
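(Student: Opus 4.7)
The strategy parallels the classical derivation of equation~(\ref{uno}) for the single-variable Ramanujan sum. First I would detect the coprimality condition by M\"obius inversion, writing
$$c_q(\nv) = \sum_{d\mid q}\mu(d) \sum_{\substack{\av\in(\Z/q\Z)^r\\ d\mid a_i \ \forall\, i}}e^{2\pi i \,\av\cdot \nv/q}.$$
Substituting $a_i = db_i$ with $\bv\in(\Z/(q/d)\Z)^r$, the inner sum factors into a product of complete exponential sums $\prod_{i=1}^r\sum_{b_i\Mod{q/d}}e^{2\pi i b_i n_i/(q/d)}$, which equals $(q/d)^r$ when $q/d\mid n_i$ for every $i$ (equivalently, $q/d\mid (q,\nv)$) and vanishes otherwise. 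Reindexing by $e:=q/d$ then yields
$$c_q(\nv)=\sum_{e\mid(q,\nv)}e^{r}\,\mu(q/e).$$

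Next, I set $g:=(q,\nv)$, $s:=q/g$, and apply the change of variable $f=g/e$, which transforms the formula into $c_q(\nv)=g^r\sum_{f\mid g}\mu(sf)/f^r$. If $s$ is not squarefree, then $s\mid sf$ forces $\mu(sf)=0$ for every $f$, so $c_q(\nv)=0$; in that case $\mu(q/g)=\mu(s)=0$ as well, and both sides of the claimed identity vanish. Otherwise, $\mu(sf)=\mu(s)\mu(f)$ when $(s,f)=1$ and vanishes when $(s,f)>1$, so the surviving sum factors as an Euler product over the primes dividing $g$ that are coprime to $s$:
$$c_q(\nv)=g^r\mu(s)\sum_{\substack{f\mid g\\ (f,s)=1}}\frac{\mu(f)}{f^r}=g^r\mu(s)\prod_{\substack{\ell\mid g\\ \ell\nmid s}}\left(1-\frac{1}{\ell^r}\right).$$

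The final step is to match this expression with $\mu(q/g)\,J_r(q)/J_r(q/g)$. From the definition one has $J_r(q)/J_r(s)=g^r\prod_{\ell\mid q,\,\ell\nmid s}(1-\ell^{-r})$, and any prime $\ell\mid q$ with $\ell\nmid s$ satisfies $v_\ell(g)=v_\ell(q)\geq1$ and hence $\ell\mid g$; the converse inclusion is immediate, so the two index sets coincide and the Euler products agree. This completes the identification. The only even mildly delicate point in the argument is this bookkeeping of which prime divisors appear in each factor; everything else is routine manipulation of sums together with the standard orthogonality relation for additive characters of $\Z/n\Z$.
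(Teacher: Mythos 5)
Your proof is correct, but it takes a genuinely different route from the paper's. You run a single global computation: M\"obius inversion over the gcd condition, orthogonality of additive characters mod $q/d$ to collapse the inner sum, the reindexing $e=q/d$, $f=g/e$ with $g=(q,\nv)$ and $s=q/g$, and then an Euler-product identification; the only point requiring care (which you handle correctly) is that the primes dividing $q$ but not $s$ are exactly the primes dividing $g$ but not $s$, since $v_\ell(q)=v_\ell(g)$ for such $\ell$. The paper instead proceeds locally: it evaluates $c_\ell(\nv)$ for $\ell$ prime, then $c_{\ell^k}(\nv)$ for $k\ge2$ via a recursive splitting of the coprimality condition and a case analysis on $v_\ell(\gcd(n_1,\dots,n_r))$, then proves that $q\mapsto c_q(\nv)$ is multiplicative by a Chinese-remainder argument, and finally assembles the formula from the multiplicativity of $\mu$ and $J_r$. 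Your argument is shorter and avoids both the prime-power casework and the separate multiplicativity step, essentially mirroring the textbook derivation of equation~(\ref{uno}) in $r$ variables; the paper's approach is longer but makes the multiplicativity of $c_q(\nv)$ in $q$ explicit, a fact it reuses implicitly when invoking the structure of the sum over prime powers. Both are complete proofs of the lemma.
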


\begin{proof}%[Proof of Lemma \ref{MRS}]
Let us start by considering the case when $q=\ell$ is prime. Then

\begin{eqnarray*}
c_\ell(\nv)&=&
\sum_{\av\in(\Z/\ell\Z)^r\setminus\{\zv\}}
e^{2\pi i\av \cdot \nv /\ell}\\
&&=-1+ \prod_{j=1}^r\sum_{a_j=1}^\ell
e^{2\pi i a_j n_j /\ell}=\begin{cases}
                          -1 &\text{if }\ell\nmid \gcd(n_1,\cdots,n_r)\,,\\
                          \ell^r-1&\text{otherwise.}
                         \end{cases}
\end{eqnarray*}

Next we consider the case when $q=\ell^k$ with $k\ge2$ and $\ell$ prime. We need to show that
$$
c_{\ell^k}(\nv)=\begin{cases}
                              0 &\text{if }\ell^{k-1}\nmid \gcd(n_1,\cdots,n_r)\,,\\
                              -\ell^{r(k-1)}  &\text{if }\ell^{k-1}\| \gcd(n_1,\cdots,n_r)\,,\\
                              \ell^{rk}\left(1-\frac{1}{\ell^r}\right)  &\text{if }\ell^{k}\mid \gcd(n_1,\cdots,n_r)\,.\\
                             \end{cases}
$$
To prove that, we start writing
\begin{eqnarray*}
c_{\ell^k}(\nv)&=&
\sum_{\substack{\av\in(\Z/\ell^k\Z)^r\\ (\ell,\av)=1}}
e^{2\pi i\av \cdot \nv /\ell^k}
\\
&=& c_{\ell^k}(n_1)
\prod_{j=2}^r\sum_{a_j=1}^{\ell^k}
e^{2\pi i a_jn_j/\ell^k}
+c_{\ell^k}(n_2,\ldots,n_r)
\sum_{j=1}^k
\sum_{\substack{a_1\in\Z/\ell^k\Z\\ (a_1,\ell^k)=\ell^j}}
e^{2\pi i a_1n_1/\ell^k}
\\
&=&c_{\ell^k}(n_1)
\prod_{j=2}^r\sum_{a_j=1}^{\ell^k}
e^{2\pi i a_jn_j/\ell^k}
+c_{\ell^k}(n_2,\ldots,n_r)
\sum_{j=1}^k
c_{\ell^{k-j}}(n_1)\;.
\end{eqnarray*}
If we apply (\ref{uno}), we obtain
\begin{eqnarray*}c_{\ell^k}(n_1,\ldots,n_r)&=&
 \mu\left(\frac{\ell^k}{(\ell^k, n_1)}\right)\frac{\varphi(\ell^k)}{\varphi\left(\frac{\ell^k}{(\ell^k, n_1)}\right)}
\prod_{j=2}^r\sum_{a_j=1}^{\ell^k}
e^{2\pi i a_jn_j/\ell^k}\\
&&+c_{\ell^k}(n_2,\ldots,n_r)
\sum_{j=1}^k
 \mu\left(\frac{\ell^{k-j}}{(\ell^{k-j}, n_1)}\right)\frac{\varphi(\ell^{k-j})}{\varphi\left(\frac{\ell^{k-j}}{(\ell^{k-j}, n_1)}\right)}\;.
\end{eqnarray*}

\noindent Now, for $k\ge2$, let us distinguish the two cases:
\begin{enumerate}
 \item {$\ell^{k-1}\nmid\gcd(n_1,\ldots,n_r)$}\,,
  \item {$\ell^{k-1}\mid\gcd(n_1,\ldots,n_r)$}\,.
%   \item {$\ell^{k-1}\|\gcd(n_1,\ldots,n_r)$}
\end{enumerate}
In the fist case we can assume, without loss of generality, that $\ell^{k-1}\nmid n_1$. Hence $\mu\left(\frac{\ell^k}{(\ell^k, n_1)}\right)=0$ and
if $k_1=v_\ell(n_1)<k-1$, then
$$
\mu\left(\frac{\ell^{k-j}}{(\ell^{k-j}, n_1)}\right)=\mu(\ell^{\max\{0,k-k_1-j\}})=\begin{cases}
                                                                                     0& \text{ if } 1\le j\le k-k_1-2,\\
                                                                                     -1& \text{ if } j=k-k_1-1,\\
                                                                                     1& \text{ if } j\ge k-k_1.
                                                                                    \end{cases}
$$
Hence
$$\sum_{j=1}^k
 \mu\left(\frac{\ell^{k-j}}{(\ell^{k-j}, n_1)}\right)\frac{\varphi(\ell^{k-j})}{\varphi\left(\frac{\ell^{k-j}}{(\ell^{k-j}, n_1)}\right)}
 =
 -\ell^{k_1}+\sum_{j=k-k_1}^{k}
 \varphi(\ell^{k-j})=0.
$$
In the second case, from the definition of $c_{q}(\nv)$ we find
$$c_{\ell^k}(\nv)=\ell^{r(k-1)}\,c_{\ell}\left(\frac{n_1}{\ell^{k-1}},\ldots,\frac{n_r}{\ell^{k-1}}\right)
=\begin{cases}
  \ell^{rk}\left(1-\frac{1}{\ell^r}\right) &\text{if }\ell^k\mid \gcd(n_1,\ldots,n_r)\,,\\
  -\ell^{r(k-1)}&\text{if }\ell^{k-1}\| \gcd(n_1,\ldots,n_r)\,.
 \end{cases}
$$
So, the formula  holds for the case $q=\ell^k$.
%$$
%\#\{(a_1,\ldots,a_r)\in\Z/\ell^k\Z:\ \gcd(a_1,\cdots,a_r,\ell)=1\}=\ell^{rk}\left(1-\frac{1}{\ell^r}\right).
%$$

Finally, we claim that if $q',q''\in\N$ are such that $\gcd(q',q'')=1$, then
$$c_{q'q''}(\nv)=c_{q'}(\nv)\,c_{q''}(\nv)\,;$$
this amounts to saying that the multiple Ramanujan sum is multiplicative in $q$.
Indeed
\begin{eqnarray*}
 &\sum_{\substack{\av\in(\Z/q'\Z)^r\\ (q',\av)=1}}e^{2\pi i \av \cdot \nv/q'}&
 \sum_{\substack{\bv\in(\Z/q''\Z)^r\\ (q'',\bv)=1}}e^{2\pi i\bv\cdot \nv /q''}\\
  &&=\sum_{\substack{\av\in(\Z/q'\Z)^r\\ \bv\in(\Z/q''\Z)^r\\
\gcd(q',\av)=1\\ \gcd(q'',\bv)=1}}e^{2\pi i[n_1(q''a_1+q'b_1)+\cdots+n_r(q''a_r+q'b_r)]/(q'q'')}
 \end{eqnarray*}
and the result follows from the remark that, since $\gcd(q',q'')=1$,
\begin{itemize}
 \item for all $j=1,\ldots r$,
as $a_j$ runs through a complete set of residues modulo $q'$ and as $b_j$ runs through a complete set of residues modulo $q''$,
$q''a_j+q'b_j$ runs through a complete set of residues modulo $q'q''$.
 \item for all $\av\in(\Z/q'\Z)^r$ and for all $\bv\in(\Z/q''\Z)^r$,
\begin{eqnarray*}&\gcd(q',\av)=1&\text{ and }\gcd(q'',\bv)=1\\
&&\quad\Longleftrightarrow\quad
\gcd(q'q'',q'b_1+q''a_1',\ldots,q'b_r+q''a_r)=1.
\end{eqnarray*}
\end{itemize}
The proof of the Lemma now follows from the multiplicativity of $\mu$ and of $J_r$.
\end{proof}

%\noindent 
From the previous Lemma we deduce the following Corollary:
\begin{corollary} \label{DirichletSums}
Let $p$ be an odd prime, let $m\in\N$ be a divisor of $p-1$. Given a $r$-tuple $\chiv=(\chi_1,\ldots,\chi_r)$  of Dirichlet
characters modulo $p$, we set
$$
c_m(\chiv):=\frac{1}{(p-1)^r}\sum_{\substack{\alphav\in(\F_p^*)^r\\ [\F_p^*:\langle\alphav\rangle_p]=m}}\chiv(\alphav)\;.
$$
Then%, we have the following estimate
\begin{equation}\label{cmchiv}
\begin{split}
c_m(\chiv)=&\frac{1}{(p-1)^r}\,\mu\left(\frac{p-1}{m\gcd\left(\frac{p-1}m,\frac{p-1}{\operatorname{ord}(\chi_1)},\ldots,\frac{p-1}{\operatorname{ord}(\chi_r)}\right)}\right)\\ &\times
\frac{J_r\left(\frac{p-1}{m}\right)}{J_r\left(\frac{p-1}{m\gcd\left(\frac{p-1}m,\frac{p-1}{\operatorname{ord}(\chi_1)},\ldots,\frac{p-1}{\operatorname{ord}(\chi_r)}\right)}\right)}\;.
\end{split}
\end{equation}

\end{corollary}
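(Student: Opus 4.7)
The plan is to reduce the sum defining $c_m(\chiv)$ to a multiple Ramanujan sum of the type handled in Lemma \ref{MRS}, and then simplify the gcd that appears. Fix a generator $g$ of $\F_p^*$, which gives an isomorphism $\F_p^* \simeq \Z/(p-1)\Z$ via $g^a \leftrightarrow a$. Under this identification, each character $\chi_j \in \widehat{\F_p^*}$ is of the form $\chi_j(g^a) = e^{2\pi i n_j a/(p-1)}$ for a unique $n_j \in \Z/(p-1)\Z$; moreover $\ord(\chi_j) = (p-1)/\gcd(p-1, n_j)$, so that $\gcd(p-1, n_j) = (p-1)/\ord(\chi_j)$. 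Writing $\alpha_j = g^{a_j}$, one checks that $\langle\alphav\rangle_p$ corresponds to the cyclic subgroup of $\Z/(p-1)\Z$ generated by $\gcd(p-1, a_1, \ldots, a_r)$, hence the condition $[\F_p^*:\langle\alphav\rangle_p] = m$ is equivalent to $\gcd(p-1, a_1, \ldots, a_r) = m$.

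With these translations, setting $Q := p-1$,
\[
\sum_{\substack{\alphav\in(\F_p^*)^r\\ [\F_p^*:\langle\alphav\rangle_p]=m}}\chiv(\alphav)
= \sum_{\substack{\av \in (\Z/Q\Z)^r \\ (Q,\av)=m}} e^{2\pi i \av \cdot \nv / Q}.
\]
Since $m \mid Q$, the substitution $a_j = m b_j$ is bijective between $\{\av \in (\Z/Q\Z)^r : m \mid a_j\}$ and $(\Z/(Q/m)\Z)^r$, and one has the identity $\gcd(Q, m b_1, \ldots, m b_r) = m\,\gcd(Q/m, b_1, \ldots, b_r)$. Thus the constraint becomes $(Q/m, \bv) = 1$ and the exponential simplifies to $e^{2\pi i \bv \cdot \nv / (Q/m)}$. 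So the sum above equals exactly $c_{Q/m}(\nv)$ in the notation of Lemma \ref{MRS}.

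Applying Lemma \ref{MRS} gives
\[
c_{Q/m}(\nv) = \mu\!\left(\frac{Q/m}{(Q/m,\nv)}\right)\frac{J_r(Q/m)}{J_r\!\left(\frac{Q/m}{(Q/m,\nv)}\right)}.
\]
It remains to identify $(Q/m,\nv)$ with $\gcd\!\bigl(Q/m,\,Q/\ord(\chi_1),\ldots,Q/\ord(\chi_r)\bigr)$. For this, observe that since $Q/m \mid Q$, associativity of $\gcd$ yields
\[
\gcd(Q/m, n_j) = \gcd\bigl(\gcd(Q/m, Q), n_j\bigr) = \gcd\bigl(Q/m, \gcd(Q, n_j)\bigr) = \gcd(Q/m, Q/\ord(\chi_j)),
\]
and taking the gcd over $j = 1, \ldots, r$ gives the desired equality. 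Substituting this into the expression for $c_{Q/m}(\nv)$, dividing by $(p-1)^r$, and using $(Q/m)/\Delta = (p-1)/(m\Delta)$ with $\Delta$ the gcd in question, produces exactly the formula (\ref{cmchiv}).

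The only step requiring care is the gcd identity in the last paragraph; the rest is a direct translation via the generator and an invocation of Lemma \ref{MRS}.
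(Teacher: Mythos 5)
Your proof is correct and follows essentially the same route as the paper: fix a primitive root, rewrite the condition $[\F_p^*:\langle\alphav\rangle_p]=m$ as $(p-1,\av)=m$, substitute $a_j=mb_j$ to reduce the sum to $c_{(p-1)/m}(\nv)$, apply Lemma~\ref{MRS}, and convert $\gcd(n_j,p-1)$ into $(p-1)/\ord(\chi_j)$. You merely spell out the substitution and the gcd-associativity step that the paper leaves implicit.
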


\begin{proof}%[Proof of Corollary \ref{DirichletSums}]
Let us fix a primitive root $g\in\F_p^*$. For each $j=1,\ldots, r$, let $n_j\in\Z/(p-1)\Z$ be such that
 $$
\chi_j=\chi_j(g)=e^{\frac{2\pi i n_j}{p-1}}\;;
$$
if we write $\alpha_j=g^{a_j}$ for $j=1,\ldots,r$, then
$$
[\F_p^*:\langle\alphav\rangle_p]=m\quad\Longleftrightarrow\quad (p-1,\av)=m\;.
$$
Therefore, naming $t=\frac{p-1}{m}$, we have
\begin{equation}\label{cmchi}
\begin{split}
c_m(\chiv)&=
\frac{1}{(p-1)^r}\sum_{\substack{\av\in(\F_p^*)^r\\ \left(p-1,\av\right)=m}}\chi_1(g)^{a_1}\cdots\chi_r(g)^{a_r}=
\frac{1}{(p-1)^r}\sum_{\substack{\av'\in\left(\Z/t\Z\right)^r\\ \left(t,\av'\right)=1}} e^{2\pi i \av'\cdot \nv/t}\\ &=
\frac{1}{(p-1)^r}\,c_{\frac{p-1}m}(\nv).
\end{split}
\end{equation}
By definition we have that $\operatorname{ord}(\chi_j)=(p-1)/\gcd( n_j,p-1)$, so
$$
\frac{p-1}{m\gcd\left(\frac{p-1}m,\nv\right)}=
\frac{p-1}{m\gcd\left(\frac{p-1}m,\frac{p-1}{\operatorname{ord}(\chi_1)},\ldots,\frac{p-1}{\operatorname{ord}(\chi_r)}\right)}
$$
%Finally
%Since, if $m\mid n$ then $J_r(m)/m^r\ge J_r(n)/n^r$, then
%$$
%|c_m(\chiv)|
%\le \left(\frac{\gcd\left(\frac{p-1}m,\frac{p-1}{\operatorname{ord}\chi_1},\ldots,\frac{p-1}{\operatorname{ord}\chi_r}\right)}{p-1}\right)^r
%= \frac{1}{\operatorname{lcm}(m,\operatorname{ord}\chi_1,\ldots,\operatorname{ord}\chi_r)^r}
%$$
and this, together with Lemma 1, concludes the proof.
\end{proof}

For a fixed rank $r$, define $R_p(m):=\#\{\av\in (\Z/(p-1)\Z)^r : (\av,p-1)=m \}$.
Then using well-known properties of the M\"obius function, we can write
$$
R_p(m)=\sum_{\av \in \left(\frac{\Z}{(p-1)\Z}\right)^r} \sum_{\substack{ n\mid \frac{a_1}{m} \\ \vdots \\ n\mid \frac{a_r}{m} \\ n\mid \frac{p-1}{m} }} \mu(n)
= \sum_{n\mid \frac{p-1}{m}} \mu(n) [h_m(n)]^r \;,
$$
where
$$
h_m(n)=\# \left\{ a \in \frac{\Z}{(p-1)\Z} : n\mid \frac{a}{m}   \right\} = \frac{p-1}{nm} \;,
$$
so that
$$
R_p(m)= \left(\frac{p-1}{m}\right)^r \sum_{n\mid \frac{p-1}{m}} \frac{\mu(n)}{n^r}=J_r \left(\frac{p-1}{m}\right) \;.
$$

Defining %for every divisor $m$ of $p-1$
\begin{equation} \label{Smx}
\begin{split}
S_m(x)&:=\frac1{m^r}\sum_{\substack{p\leq x \\ p\equiv1\Mod{m}}} \sum_{n\mid \frac{p-1}{m}} \frac{\mu(n)}{n^r}
\\ &= \sum_{\substack{p\leq x \\ p\equiv1\Mod{m}}} \frac1{(p-1)^r} \,J_r\left(\frac{p-1}{m}\right) \;,
\end{split}
\end{equation}
we have the following Lemma.
\begin{lemma} \label{MainTerm}
If $m\le(\log x)^D$, with $D$ arbitrary positive constant, then for every arbitrary constant $M>1$
$$
S_m(x)= C_{r,m} \Li(x) + O\left( \frac{x}{m^r(\log x)^M}\right) \;,
$$
where $C_{r,m}=\sum_{n\geq 1}\frac{\mu(n)}{(nm)^r\varphi(nm)}$.
\end{lemma}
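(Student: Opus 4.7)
\emph{Plan of proof.} The strategy is to switch the order of summation so that $S_m(x)$ becomes a weighted sum over arithmetic progressions $\pi(x;nm,1)$, and then to split the range of $n$ into three pieces handled respectively by Siegel--Walfisz, Brun--Titchmarsh, and the trivial bound.

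First, reversing the order of summation (the condition $n\mid (p-1)/m$ becomes $nm\mid p-1$) gives
\[
m^r S_m(x)=\sum_{n\le (x-1)/m}\frac{\mu(n)}{n^r}\,\pi(x;nm,1).
\]
Choose a cut-off $y=(\log x)^B$ with $B=B(M,D,r)$ sufficiently large, and split the sum at $n=y/m$ and at $n=x^{1/2}/m$.

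In the first range $n\le y/m$ we have $nm\le (\log x)^B$, which lies in the Siegel--Walfisz range (using $m\le(\log x)^D$), so
\[
\pi(x;nm,1)=\frac{\Li(x)}{\varphi(nm)}+O\!\left(x\exp(-c\sqrt{\log x})\right),
\]
and the $O$-term contributes negligibly after summation against $|\mu(n)|/n^r$. The main contribution is $\Li(x)\sum_{n\le y/m}\mu(n)/(n^r\varphi(nm))$; completing this to the full series $\sum_{n\ge 1}\mu(n)/(n^r\varphi(nm))=m^r C_{r,m}$ costs a tail which, using $\varphi(nm)\ge\varphi(n)\varphi(m)$ and the elementary estimate $\sum_{n>N}1/(n^r\varphi(n))\ll \log\log N/N$ for $r=1$ (faster decay for $r\ge 2$), is absorbed into the final error once $B$ is large.

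In the middle range $y/m<n\le x^{1/2}/m$, Brun--Titchmarsh yields $\pi(x;nm,1)\ll x/(\varphi(nm)\log x)$ since $\log(x/nm)\gg\log x$, so the contribution is bounded by $(x/\log x)\sum_{n>y/m}1/(n^r\varphi(nm))$ and is again controlled by the tail estimate above. In the last range $n>x^{1/2}/m$, the trivial bound $\pi(x;nm,1)\le x/(nm)+1$ gives a contribution of order $x^{1-r/2}m^{r-1}$, which is negligible for any $r\ge 1$. Choosing $B$ large enough so that each error term is $\ll x/(\log x)^{M+rD}$ and then dividing through by $m^r$ (noting $m^r\le(\log x)^{rD}$) yields the stated asymptotic with error $O(x/(m^r(\log x)^M))$. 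The identification $\sum_{n\ge 1}\mu(n)/(n^r\varphi(nm))=m^r C_{r,m}$ is immediate from the definition of $C_{r,m}$. The only delicate point is the tail control when $r=1$, where the slow convergence of $\sum 1/(n\varphi(n))$ forces $B$ to be taken with some margin above $M$; every other step is routine.
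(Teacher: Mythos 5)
Your argument is correct and follows essentially the same route as the paper: exchange the order of summation, cut at $nm\le(\log x)^B$, apply Siegel--Walfisz below the cut, complete the series to $m^rC_{r,m}$ using $\varphi(nm)\ge\varphi(m)\varphi(n)$ and $n/\varphi(n)\ll\log\log n$, and bound the remaining range crudely. The only deviation is your extra Brun--Titchmarsh middle range, which is harmless but unnecessary, since the trivial bound $\pi(x;1,nm)\le x/(nm)$ already disposes of the entire range $n>(\log x)^B$, as the paper does.
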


\begin{proof}
We choose an arbitrary positive constant $B$, and for every coprime integers $a$ and $b$,
we denote $\pi(x;a,b)=\# \{p\leq x : p\equiv a \Mod{b} \}$, then
\begin{eqnarray*}
%\sum_{p\leq x} \sum_{m\mid p-1} \frac{\mu(m)}{m^r}
S_m(x) &=& \sum_{n\leq x} \frac{\mu(n)}{(nm)^r}\, \pi(x;1,nm) \\&=& 
\sum_{n\leq (\log x)^B }\frac{\mu(n)}{(nm)^r}\, \pi(x;1,nm) \\ &&+ O \left( \sum_{(\log x)^B<n\leq x} \frac{1}{(nm)^r}\, \pi(x;1,nm) \right) \;.
\end{eqnarray*}
The sum in the error term is
\begin{eqnarray*}
\sum_{(\log x)^B<n\leq x} \frac{1}{(nm)^r}\, \pi(x;1,nm)& \leq&
 \frac{1}{m^r}\sum_{n>(\log x)^B} \frac{1}{n^r}\sum_{\substack{2\leq a \leq x \\ a \equiv 1 \Mod{mn}}} 1
\\&\leq& \frac{1}{m^{r+1}} \sum_{n>(\log x)^B} \frac{x}{n^{r+1}}\\ &\ll&\frac{x}{m^{r+1}(\log x)^{rB}} \;.
\end{eqnarray*}
For the main term we apply the Siegel--Walfisz Theorem \cite{Walf}, which states that for every arbitrary positive constants $B$ and $C$, if
$a\leq (\log x)^B$, then
$$
\pi(x;1,a) = \frac{\Li(x)}{\varphi(a)} + O\left( \frac{x}{(\log x)^C} \right)\;.
$$
So, if we restrict $m\le (\log x)^D$ for any positive constant $D$,
\begin{eqnarray*}
S_m(x) &=&\sum_{\substack{n\leq(\log x)^B}} \frac{\mu(n)}{(nm)^r\varphi(mn)} \Li(x) +
O\left( \frac{x}{(\log x)^C} \sum_{n\leq (\log x)^B} \frac{1}{(nm)^r}  \right)
\\&&+ O\left( \frac{x}{m^{r+1}(\log x)^{rB}} \right)
\\ &=& %\sum_{n\geq 1} \frac{\mu(n)}{(nm)^r\varphi(nm)}
\,C_{r,m}\Li(x) +
O\left( \sum_{n> (\log x)^B} \frac{\Li(x)}{(nm)^r\varphi(nm)}   \right)
+O\left(\frac{x\log\log x}{m^r(\log x)^{C}} \right)
%\\ \nonumber& 
\\ &&+O\left( \frac{x}{m^{r+1}(\log x)^{rB}} \right)
\\ &=& %\sum_{n\geq 1} \frac{\mu(n)}{(mn)^r\varphi(mn)}
\,C_{r,m} \Li(x) +
O\left(\frac{1}{m^r\varphi(m)} \sum_{n> (\log x)} \frac{\Li(x)}{n^r\varphi(n)}   \right)
+O\left(\frac{x\log\log x}{m^r(\log x)^{C}} \right)
%\\ \nonumber&
\\&&+O\left( \frac{x}{m^{r+1}(\log x)^{rB}} \right) \;,
\end{eqnarray*}
where we have used the elementary inequality $\varphi(mn)\ge\varphi(m)\varphi(n)$.
Since, for every $n\geq3$, we have (see \cite[Theorem~8.8.7]{BS})
\begin{equation}\label{loglogn}
\frac{n}{\varphi(n)} < e^{\gamma}\log\log n +\frac3{\log\log n}\ll \log\log n\;,
\end{equation}
%asymptotically $n/\varphi(n)\ll \log\log n$ (see \cite[Theorem~272]{HW})
%By Mertens' formula
%$$
%\frac{n}{\varphi(n)} = \prod_{p\mid n} \left(1 - \frac{1}{p} \right)^{-1} \leq
 %\prod_{p\leq n} \left(1 - \frac{1}{p} \right)^{-1} \ll \log n \;,
%$$
then
$$
\sum_{n> (\log x)^B} \frac{1}{n^r\varphi(n)} \ll \sum_{n> (\log x)^B} \frac{\log\log n}{n^{r+1}}
\ll \frac{ \log \log \log x}{(\log x)^{rB}}\;.
$$
Thus
$$
\frac{1}{m^r\varphi(m)} \sum_{n>(\log x)^B} \frac{1}{n^r\varphi(n)} \Li(x) 
\ll \frac{x}{m^r\varphi(m)(\log x)^{rB}} \;,
$$
proving the lemma for a suitable choice of $D$, $B$ and $C$.
\end{proof}

The following Lemma concerns the Titchmarsh Divisor Problem \cite{Tit} in the case of primes $p\equiv1\Mod m$. 
Asymptotic results on this topic can be found in \cite{Felix} and \cite{Fiorilli}.

\begin{lemma} \label{tau_sum}
Let $\tau$ be the divisor function and $m\in\N$. If $m\leq (\log x)^D$ for an arbitrary positive constant $D$, 
we have the following inequality:
$$
\sum_{\substack{p\le x\\ p\equiv1\Mod m}}\tau\left(\frac{p-1}m\right)
\le\frac{8x}{m}.
%\ll \frac{x}{m}\;.
$$
 \end{lemma}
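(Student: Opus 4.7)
The plan is to reduce the divisor sum to a sum of $\pi(x;1,md)$ via the standard divisor-hyperbola trick, and then invoke Brun--Titchmarsh.

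First, for any positive integer $k$, pairing each divisor $d\le\sqrt{k}$ of $k$ with its conjugate $k/d\ge\sqrt{k}$ yields $\tau(k)\le 2\,\#\{d\mid k:d\le\sqrt{k}\}$. Applying this with $k=(p-1)/m$, interchanging the order of summation, and relaxing the condition $d\le\sqrt{(p-1)/m}$ to the uniform $d\le\sqrt{(x-1)/m}$ gives
$$
\sum_{\substack{p\le x\\p\equiv1\Mod{m}}}\tau\!\left(\tfrac{p-1}{m}\right)\;\le\;2\sum_{d\le\sqrt{(x-1)/m}}\pi(x;1,md),
$$
where we used that $d\mid(p-1)/m$ is equivalent to the congruence $p\equiv1\Mod{md}$.

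Next, I would apply the Brun--Titchmarsh inequality $\pi(x;1,q)\le 2x/(\varphi(q)\log(x/q))$ to each term. The hypothesis $m\le(\log x)^D$ together with $d\le\sqrt{x/m}$ forces $md\le\sqrt{mx}\le x^{1/2+o(1)}$, so Brun--Titchmarsh is applicable and $\log(x/(md))\ge\tfrac12\log(x/m)$ throughout the range. Using $\varphi(md)\ge\varphi(m)\varphi(d)$ and the classical mean-value estimate $\sum_{d\le Y}1/\varphi(d)\le A\log Y+O(1)$ (with $A=\zeta(2)\zeta(3)/\zeta(6)<2$) at $Y=\sqrt{x/m}$, the two logarithmic factors cancel and one obtains a bound of the shape $\le cx/\varphi(m)$ with some explicit constant $c$. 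In the range $m\le(\log x)^D$, Mertens' bound $m/\varphi(m)\le e^{\gamma}\log\log m+o(1)$ grows slowly enough that---after choosing $x$ large depending on $D$---the conversion from $\varphi(m)$ to $m$ can be absorbed into the numerical slack, yielding the claimed $\le 8x/m$.

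The main technical obstacle is bookkeeping the explicit constants: the factor $2$ from the hyperbola trick, the factor $2$ from Brun--Titchmarsh, the leading constant $A<2$ from the $1/\varphi$ mean value, and the conversion factor $m/\varphi(m)$ must all multiply out to at most $8$. The two non-elementary ingredients---the divisor-hyperbola reduction and the Brun--Titchmarsh sieve bound---carry all the analytic weight; without Brun--Titchmarsh, the trivial estimate $\pi(x;1,q)\le x/q+1$ would leave an extra $\log x$ factor and only yield the weaker $\ll x\log x/m$.
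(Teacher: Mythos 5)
Your reduction is structurally identical to the paper's: the paper writes $p-1=mjk$, splits at $Q=\sqrt{(x-1)/m}$ into the three hyperbola ranges, and after discarding a negative term arrives at exactly your inequality $\sum_{p}\tau\left(\frac{p-1}{m}\right)\le 2\sum_{k\le Q}\pi(x;1,km)$; it then applies the same Montgomery--Vaughan form of Brun--Titchmarsh and the same mean-value estimate $\sum_{k\le Q}1/\varphi(k)=A\log Q+O(1)$. Up to that point your argument is correct and matches the paper step for step.

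The gap is in your final sentence. After $\varphi(md)\ge\varphi(m)\varphi(d)$ you have a bound of the shape $c\,x/\varphi(m)$ with $c$ already close to $8$ (the factors $2\cdot 2\cdot 2\cdot(A/2)$ multiply to about $7.8$), and you claim the passage from $x/\varphi(m)$ to $x/m$ can be "absorbed into the numerical slack". It cannot: the needed factor $m/\varphi(m)$ equals $2$ already for $m=2$ and is unbounded over $m\le(\log x)^D$ (take $m$ a product of many small primes), and it depends on $m$, not on $x$, so no choice of $x$ large rescues a fixed constant. What your argument honestly proves is $\ll x/\varphi(m)$, not $\le 8x/m$. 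You should know that the paper's own proof conceals the identical difficulty behind the "elementary inequality" $\varphi(km)\ge m\varphi(k)$, which is false already at $k=1$ (it asserts $\varphi(m)\ge m$); the correct inequality is the one you used. Indeed, the Titchmarsh-divisor heuristic $\sum_{d}\pi(x;1,md)\approx\operatorname{Li}(x)\sum_d 1/\varphi(md)\asymp\left(m/\varphi(m)\right)^2 x/m$ suggests the stated bound $8x/m$ fails for $m$ with many small prime factors, so the constant cannot be repaired by bookkeeping alone. A bound of the form $\ll\left(m/\varphi(m)\right)^2x/m\ll x(\log\log x)^2/m$ is what the method delivers and is all that the applications of this lemma in the paper actually require.
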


 \begin{proof}
%  Let us denote with $S$ the sum in the left hand side of the statement. Hence
%  $$S=\sum_{\substack{p\le x\\ p\equiv1\Mod m}}2^{\omega(\frac{p-1}{m})}=
%  \sum_{a=1}^m\sum_{\substack{p\le x\\ p\equiv1+am\Mod m^2}}2^{\omega(\frac{p-1}{m})}.
%  $$
% If $p\equiv1+am\Mod m^2$ and $\ell\mid \frac{p-1}m$, then $\ell\mid $
%
Let us write $p-1=mjk$ so that $jk\le(x-1)/m$ and let us set $Q=\sqrt{\frac{x-1}m}$ and distinguish the three cases
\begin{itemize}
\item $j\le Q$, $k> Q$,
\item $j> Q$, $k\le Q$,
\item $j\le Q$, $k\le Q$.
\end{itemize}
So we have the identity
\begin{eqnarray*}
\sum_{\substack{p\le x\\ p\equiv1\Mod m}}\tau\left(\frac{p-1}m\right)&=&
\sum_{\substack{j\le Q}}
\sum_{\substack{Q< k\le\frac{Q^2}j\\ mjk+1\text{ prime}}}1+
\sum_{\substack{k\le Q}}
\sum_{\substack{Q< j\le\frac{Q^2}k\\ mjk+1\text{ prime}}}1\\&&+
\sum_{\substack{j\le Q}}
\sum_{\substack{k\le Q\\ mjk+1\text{ prime}}}1\\
&=& 2\sum_{\substack{k\le Q}}
\sum_{\substack{mkQ+1< p\le x\\ p\equiv 1\Mod{km}}}1+
\sum_{\substack{k\le Q}}
\sum_{\substack{p\le mkQ+1\\ p\equiv 1\Mod{km}}}1\\
&=& 2\sum_{\substack{k\le Q}}\left(\pi(x; 1,km)-\pi(mkQ+1; 1,km)\right)\\&&+
\sum_{\substack{k\le Q}}\pi(mkQ+1; 1,km)\\
&=&2\sum_{\substack{k\le Q}}\pi(x; 1,km)-\sum_{\substack{k\le Q}}\pi(mkQ+1; 1,km)\;.
\end{eqnarray*}
Using the Montgomery--Vaughan version of the Brun--Titchmarsh Theorem:
$$ \pi(x;a,q) \le \frac{2x}{\varphi(q)\log(x/q)},$$
for $m\leq (\log x)^D$ with $D$ arbitrary positive constant, then we obtain
\begin{eqnarray*}
\sum_{\substack{p\le x\\ p\equiv1\Mod m}}\tau\left(\frac{p-1}m\right)&\le& 2\sum_{\substack{k\le Q}}
\frac{2x}{\varphi( km)\log(x/ km)}\\&\le&
\frac{4x}{\log(x/mQ)}\sum_{\substack{k\le Q}}\frac1{\varphi(km)}
\\&\le&\frac{8x}{\log(x/m)}\sum_{\substack{k\le Q}}\frac1{\varphi(km)}\;.
\end{eqnarray*}
Now, substitute the elementary inequality $\varphi(km)\ge m\varphi(k)$ and use a result of Montgomery \cite{Montgomery}
$$
\sum_{k\le Q}\frac1{\varphi(k)}=A\log Q+B+O\left(\frac{\log Q}{Q}\right)\;,
$$
where
$$A=\frac{\zeta(2)\zeta(3)}{\zeta(6)}=1.94360\cdots\quad\text{and}\quad
B=A\gamma-\sum_{n=1}^\infty\frac{\mu^2(n)\log n}{n\varphi(n)}=-0.06056\dots\;,$$
which in particular implies that, for $Q$ large enough,
$$
A\log Q-1\le\sum_{k\le Q}\frac1{\varphi(k)}\le A\log Q\le \log(x/m)\;.
$$
%$$
%\sum_{k\le Q} \frac1{\varphi(k)}
%%=\frac{315\zeta(3)}{2\pi^4}\left(\log Q+\gamma-\sum_{p\text{ prime}}\frac{\log p}{p^2-p+1}\right)+
%%O\left(\frac{(\log Q)^{2/3}}n\right)
%\ll \log Q\ll\log(x/m)\;.
%$$
Finally
$$
\sum_{\substack{p\le x\\ p\equiv1\Mod m}}\tau\left(\frac{p-1}m\right)
\le
%\frac{2x}{\varphi(m)\log(x/m)}\left(\frac12\log(x/m)+1\right)=
%\frac{x}{\varphi(m)}\left(1+\frac2{\log(x/m)}\right)\;.
%\ll
\frac{8x}{m}\;.
$$
\end{proof}

\begin{lemma} \label{const}

Let $p$ be an odd prime number and let
$$
d_m(\chi) = \sum_{\substack{\chiv \in \left(\widehat{\F_p^*} \right)^r\\ \chi_1=\chi \neq \chi_0}}|c_m(\chiv)| \;,
$$
then
$$
d_m(\chi) \leq \frac1{m} \prod_{\ell\mid \frac{p-1}{m}} \left(1+\frac1{\ell}\right)\;.
$$
\end{lemma}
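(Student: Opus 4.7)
The plan is to translate the sum over characters into a sum of multiple Ramanujan sums via a primitive root and then bound each term using Lemma~\ref{MRS}. As in the proof of Corollary~\ref{DirichletSums}, fix a primitive root $g\in\F_p^*$ and parametrize the characters by writing $\chi_j(g)=e^{2\pi i n_j/(p-1)}$ with $n_j\in\Z/(p-1)\Z$; this yields $c_m(\chiv)=(p-1)^{-r}\,c_t(\nv)$, where $t:=(p-1)/m$. Since $\chi_1=\chi$ is prescribed, $n_1$ is fixed, while $\chi_2,\ldots,\chi_r$ ranging over $\widehat{\F_p^*}$ corresponds to $n_2,\ldots,n_r$ ranging freely over $\Z/(p-1)\Z$. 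Thus
\[
d_m(\chi)=\frac{1}{(p-1)^r}\sum_{n_2,\ldots,n_r\in\Z/(p-1)\Z}|c_t(n_1,n_2,\ldots,n_r)|.
\]

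Next I would apply Lemma~\ref{MRS}. Writing $h:=\gcd(t,\nv)$, the identity
\[
\frac{J_r(t)}{J_r(t/h)}=h^r\prod_{\substack{\ell\mid t\\ \ell\nmid t/h}}\left(1-\frac{1}{\ell^r}\right)\le h^r,
\]
combined with the vanishing of $\mu(t/h)$ off squarefree $t/h$, gives the clean estimate $|c_t(\nv)|\le h^r\,\mathbf{1}[t/h\text{ squarefree}]$. Grouping the $(n_2,\ldots,n_r)$ sum according to $h$ and bounding the number of tuples with $\gcd(t,n_1,\ldots,n_r)=h$ by the number of tuples with merely $h\mid n_j$ for every $j$ (which is $((p-1)/h)^{r-1}$ once the constraint $h\mid n_1$ is dropped), one gets
\[
\sum_{n_2,\ldots,n_r}|c_t(\nv)|\le(p-1)^{r-1}\sum_{\substack{h\mid t\\ t/h\text{ squarefree}}}h.
\]

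The final step is the substitution $d=t/h$: as $d$ ranges over the squarefree divisors of $t$, equivalently over the divisors of $\Rad(t)$, one obtains
\[
\sum_{\substack{h\mid t\\ t/h\text{ squarefree}}}h=t\sum_{d\mid\Rad(t)}\frac{1}{d}=t\prod_{\ell\mid t}\left(1+\frac{1}{\ell}\right).
\]
Dividing by $(p-1)^r$ and using $t/(p-1)=1/m$ produces the desired inequality. The only mildly delicate point is the lossy-but-clean estimate on $|c_t(\nv)|$ drawn from Lemma~\ref{MRS}; the rest is just bookkeeping for the two-step divisor reparametrization. Incidentally, the hypothesis $\chi\ne\chi_0$ never enters the argument, because dropping the constraint $h\mid n_1$ discards all information about $n_1$; presumably it matters only for later applications of the lemma.
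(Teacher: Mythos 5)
Your proposal is correct and follows essentially the same route as the paper: parametrize the characters by a primitive root to reduce $d_m(\chi)$ to a sum of multiple Ramanujan sums, apply Lemma~\ref{MRS}, group the $(n_2,\dots,n_r)$ by the value of $\gcd(t,\nv)$, and evaluate the resulting divisor sum as $t\prod_{\ell\mid t}(1+1/\ell)$. The only (immaterial) difference is that the paper computes the count of tuples with a given gcd exactly via M\"obius inversion before bounding, whereas you bound that count directly by $((p-1)/h)^{r-1}$; your observation that the hypothesis $\chi\neq\chi_0$ is not needed for the inequality is also consistent with the paper's argument.
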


\begin{proof}

From equation (\ref{cmchi}) and Lemma \ref{MRS}, we have
\begin{align}
\nonumber
d_m(\chi) &= \frac1{(p-1)^r} \sum_{\substack{\nv \in \left(\frac{\Z}{(p-1)\Z} \right)^r\\ n_1 \neq 0}}
\mu^2\left(\frac{(p-1)/m}{\left(\frac{p-1}{m}, \nv\right)} \right)
\frac{J_r\left(\frac{p-1}{m}\right)}{J_r\left(\frac{(p-1)/m}{\left(\frac{p-1}{m}, \nv \right)} \right)}\;;
\end{align}
naming $t=\frac{p-1}{m}$ and $u=\gcd\left(t,n_1\right)$ % and $n_i=\frac{p-1}{\ord(\chi_i)}$
we get
\begin{align}
\nonumber
d_m(\chi) %&= \frac1{(p-1)^r}\sum_{\substack{\chiv \in \left(\widehat{\F_p^*} \right)^r\\ \chi_1=\chi \neq \chi_0}}
%\mu^2\left(\frac{t}{\left(u, \frac{p-1}{\ord(\chi_2)},\dots,\frac{p-1}{\ord(\chi_{r})}\right)} \right)
%\frac{J_r\left(t\right)}{J_r\left(\frac{t}{\left(u, \frac{p-1}{\ord(\chi_2)},\dots,\frac{p-1}{\ord(\chi_{r})}\right)} \right)}
%\\ \nonumber &
=\frac1{(p-1)^r} \sum_{d\mid t} \mu^2\left( \frac{t}{d} \right) \frac{J_r(t)}{J_r\left(\frac{t}{d}\right)} \,H(d)\;,
\end{align}
where
\begin{align}
\nonumber
H(d):=\#\left\{\xv\in\left(\frac{\Z}{(p-1)\Z}\right)^{r-1} :  \left(u,\xv \right)=d \right\} =
\left( \frac{p-1}{d}\right)^{r-1}\sum_{k\mid\frac{u}{d}}\frac{\mu(k)}{k^{r-1}} \;.
%\\ \nonumber
%&=\sum_{i=1}^{r-1}\sum_{\substack{x_i\in \frac{\Z}{(p-1)\Z} \\ \ord(x_i)\mid f}}
%\sum_{d\mid \left( \frac{f}{\ord(x_1)},\dots,\frac{f}{\ord(x_{r-1})}\right)} \mu(d) =
%f^{r-1} \sum_{d\mid f} \frac{\mu(d)}{d^{r-1}} = J_{r-1}(f) \;.
\end{align}
%Denoting $\alpha=v_{\ell}(t)$, 
Then
\begin{align}
\nonumber
d_m(\chi) &= \frac1{(p-1)} \sum_{d\mid t} \mu^2\left( \frac{t}{d} \right) \frac{J_r(t)}{d^{r-1}J_r\left(\frac{t}{d}\right)}
\sum_{k\mid\frac{u}{d}}\frac{\mu(k)}{k^{r-1}}
\\ \nonumber
&\leq \frac1{p-1} \sum_{d\mid t} \mu^2\left( \frac{t}{d} \right) d= \frac{t}{p-1} \sum_{k\mid t} \frac{\mu^2\left(k \right)}{k}
%\\ \nonumber &
= \frac1{m} \prod_{\ell \mid \frac{p-1}{m}} \left(1+\frac1{\ell} \right)
\end{align}

\end{proof}

%%%%%%%%%%%%%%%%%%%%%%%%%%%%%%%%%%%%%%%%%%%%%%%%%%%%%%%%%%%%%%%%%%%%%%%%%%%%%%%%%%

\section{Proof of Theorem $1$}%~\ref{firsorder}}%

We follow the method of Stephens \cite{St}. By exchanging the order of summation we obtain that
$$
\sum_{\substack{\av\in\Z^r \\ 0<a_1\le T_1\\ \vdots \\ 0<a_r\le T_r}} N_{\langle \av \rangle,m}(x) =
\sum_{\substack{p\leq x \\ p \equiv 1 \Mod{m}}} M_{p}^m(\Tv) \;,
$$
where $M_{p}^m(\Tv)$ is the number of $r$-tuples $\av \in \Z^r$, with $0<a_i \leq T_i$ and $v_p(a_i)=0$ for each $i=1,\dots,r$,
whose reductions modulo $p$ satisfies $[\F_p^*:\langle \av \rangle_p]=m$. We can write
$$
M_{p}^m(\Tv) = \sum_{\substack{\av\in\Z^r\\ 0<a_1\le T_1\\ \vdots \\ 0<a_r\le T_r}} t_{p,m} (\av)\;,
$$
with
$$
t_{p,m} (\av)=
\begin{cases}
1                             &\text{if $[\F_p^*:\langle \av \rangle_p]=m$\,,}\\
0                             &\text{otherwise\,.}
               \end{cases}
$$
Given a $r$-tuple $\chiv$ of Dirichlet characters mod $p$, by orthogonality relations it is easy to verify that
\begin{equation}
\label{tpm}
t_{p,m}(\av) = \sum_{\chiv \in (\widehat{\F_p^*})^r} c_m(\chiv) \chiv(\av) \;;
\end{equation}
%with
%$$
%C_m(\chiv) = \frac1{(p-1)^r}  \sum_{\substack{\av'\in (\F_p^*)^r\\ [\F_p^*:\langle \av' \rangle_p]=m}} \chiv(\av')\;.
%$$
so we have
\begin{equation}\label{mainsum}
\sum_{\substack{\av\in\Z^r \\ 0<a_1\le T_1\\ \vdots \\ 0<a_r\le T_r}} N_{\langle \av \rangle,m}(x) =
\sum_{\substack{p\leq x \\ p \equiv 1 \Mod{m}}}\sum_{\substack{\av\in\Z^r \\ 0<a_1\le T_1\\ \vdots \\ 0<a_r\le T_r}}
\sum_{\chiv \in (\widehat{\F_p^*})^r} c_m(\chiv) \chiv(\av) \;.
\end{equation}
Let $\chiv_0:=(\chi_0,\dots,\chi_0)$ be the $r$-tuple consisting of all principal characters, then
\begin{eqnarray*}
c_m(\chiv_0)&=&\frac{1}{(p-1)^r}\sum_{\substack{\av\in(\F_p^*)^r\\ [\F_p^*:\langle\av\rangle_p]=m}}\chiv_0(\av)\\&=&
\frac1{(p-1)^r}\#\{\av\in (\Z/(p-1)\Z)^r : (\av,p-1)=m \}\\&=& \frac{1}{(p-1)^r} R_p(m)\;.
\end{eqnarray*}
Denoting $|\Tv|:=\prod_{i=1}^r T_i$ and $T^*:=\min\{T_i:i=1,\dots,r\}$, through (\ref{Smx}) we can write the main term in (\ref{mainsum}) as
\begin{eqnarray*}
&&\frac{1}{|\Tv|}\sum_{\substack{p\leq x \\ p \equiv 1 \Mod{m}}}\sum_{\substack{\av\in\Z^r \\ 0<a_1\le T_1\\ \vdots \\ 0<a_r\le T_r}}
c_m(\chiv_0)\chiv_0(\av)\\&=&
\frac{1}{|\Tv|}\sum_{\substack{p\leq x \\ p \equiv 1 \Mod{m}}}c_m(\chiv_0)
\prod_{i=1}^r\left\{\lfloor{T_i}\rfloor- \lfloor{T_i/p}\rfloor\right\}
\\ &=& \sum_{\substack{p\leq x \\ p \equiv 1 \Mod{m}}} c_m(\chiv_0)\left(1-\frac{r}{p}+\dots+\frac1{p^r}+
\sum_{i=1}^r O\left( \frac1{T_i}\right)\right)
\\&=& \sum_{\substack{p\leq x \\ p \equiv 1 \Mod{m}}} c_m(\chiv_0)+ O\left(\sum_{\substack{p\leq x \\ p \equiv 1 \Mod{m}}} \frac{1}{p}\right)
+O\left(\frac{ x}{T^*\log x}\right)\\&=& S_m(x)+O(\log\log x) +O\left(\frac{ x}{T^*\log x}\right)\;.
\end{eqnarray*}
Since by hypothesis $m\le(\log x)^D$, $D>0$, and $T^*>\exp(4(\log x \log\log x)^{1/2})$, % for every $i=1,\dots,r$,
we can apply Lemma \ref{MainTerm} to obtain
$$
\frac{1}{|\Tv|}\sum_{\substack{p\leq x \\ p \equiv 1 \Mod{m}}}\sum_{\substack{\av\in\Z^r \\ 0<a_1\le T_1\\ \vdots \\ 0<a_r\le T_r}}
c_m(\chiv_0)\chiv_0(\av)= C_{r,m} \Li(x) + O\left( \frac{x}{m^r(\log x)^M}\right) \;,
$$
where $M>1$.
For the error term we need to estimate the sum
\begin{eqnarray*}
E_{r,m}(x) &:= &\frac{1}{|\Tv|} \sum_{\substack{p\leq x \\ p \equiv 1 \Mod{m}}} \sum_{\chiv \in \left(\widehat{\F_p^*} \right)^r \setminus \{\chiv_0\} }
 \left| c_m(\chiv) \sum_{\substack{\av \in \Z^r \\ 0<a_1\le T_1\\ \vdots \\ 0<a_r\le T_r}}\chiv(\av) \right|
\\&&\ll \sum_{i=1}^r\frac{1}{T_i}\sum_{\substack{p\leq x \\ p \equiv 1 \Mod{m}}} \sum_{\chi_i \in \widehat{\F_p^*}\setminus\{\chi_0\}}
d_m(\chi_i) \left| \sum_{\substack{a \in \Z \\ 0<a \leq T_i}}\chi_i(a) \right| \;,
\end{eqnarray*}
where
$$
d_m(\chi)=\sum_{\substack{\chiv \in \left(\widehat{\F_p^*} \right)^r\\ \chi_1=\chi \neq \chi_0}}|c_m(\chiv)|
%\sum_{\substack{\chiv \\\chi_1=\chi }}|c_m(\chiv)|
\;.
$$
Define
\begin{equation}\label{S2i}
E^j_{r,m}(x):=\sum_{\substack{p\leq x \\ p \equiv 1 \Mod{m}}} \sum_{\chi_i \in \widehat{\F_p^*}\setminus\{\chi_0\}} d_m(\chi_i)
 \left| \sum_{\substack{a \in \Z \\ 0<a \leq T_i}}\chi_i(a) \right| \;,
\end{equation}
then by Holder's inequality
\begin{equation} \label{Holder}
\begin{split}
\left\{E^j_{r,m}(x)\right\}^{2s_i}\leq& \left \{ \sum_{\substack{p\leq x \\ p \equiv 1 \Mod{m}}}
\sum_{\chi_i \in \widehat{\F_p^*}\setminus\{\chi_0\}} \{d_m(\chi_i)\}^{\frac{2s_i}{2s_i-1}} \right\}^{2s_i-1}\\&\times
\sum_{\substack{p\leq x \\ p \equiv 1 \Mod{m}}} \sum_{\chi_i \in \widehat{\F_p^*}\setminus\{\chi_0\}}
\left| \sum_{\substack{a \in \Z \\ 0<a\leq T_i}} \chi_i(a) \right|^{2s_i}\;.
\end{split}
\end{equation}
As before, given a primitive root $g$ modulo $p$, write  $\chi_j(g)=e^{2\pi i  n_j/(p-1)}$ for every $j=1,\dots,r$, 
with $n_j \in \Z/(p-1)\Z$, so that by equation (\ref{cmchi}) 
$$
\sum_{\chiv \in \left(\widehat{\F_p^*} \right)^r \setminus \{\chiv_0\} } c_m(\chiv) =
\frac1{(p-1)^r}\sum_{\nv \in \left(\frac{\Z}{(p-1)\Z}\right)^r \setminus\{\zv\}} c_{\frac{p-1}{m}}(\nv)\;.
$$
Denoting again $t=(p-1)/m$, from Lemma \ref{MRS} derives the following upper bound:
\begin{eqnarray*}
\sum_{\chi_i \in \widehat{\F_p^*}\setminus\{\chi_0\}} d_m(\chi_i)&
%\sum_{\substack{\chiv \in \left(\widehat{\F_p^*} \right)^r\\ \chi_1=\chi \neq \chi_0}}|c_m(\chiv)|
\leq&\sum_{\chiv \in \left(\widehat{\F_p^*} \right)^r \setminus \{\chiv_0\} } |c_m(\chiv)|
\\ &
\leq &\sum_{d\mid t} \mu^2 \left(\frac{t}{d}\right) \left[\frac{J_r(t)}{(p-1)^rJ_r(t/d)} \right]\\ &&\times
\#\left\{\nv\in \left(\Z/(p-1)\Z\right)^r: (t,\nv)=d\right\}
\\ &=&\sum_{d\mid t} \mu^2 \left(\frac{t}{d}\right) \frac{J_r(t)}{d^rJ_r(t/d)}  \sum_{k\mid\frac{t}{d}} \frac{\mu(k)}{k^r}
%\\ \nonumber&
%=\sum_{d\mid t} \mu^2 \left(\frac{t}{d}\right) \frac{J_r(t)J_r(t/d)}{(p-1)^rJ_r(t/d)}
= \frac{J_r(t)}{t^r}\sum_{d\mid t} \mu^2 \left(\frac{t}{d}\right)
\\  &
=&\prod_{\ell\mid t}\left(1-\frac1{\ell^r}\right) 2^{\omega(t)}
\leq 2^{\omega(t)}\;.
\end{eqnarray*}

Calling $D_m(p):=\max\{d_m(\chi):\chi \in \widehat{\F_p^*}\setminus\{\chi_0\}\}$ and using Lemmas \ref{const} and \ref{tau_sum},
%in equation (\ref{Holder}) 
the following asymptotic estimate holds for every $s_i\geq1$:
\begin{eqnarray*}
&&\sum_{\substack{p\leq x \\ p \equiv 1 \Mod{m}}} \sum_{\chi \in \widehat{\F_p^*}\setminus\{\chi_0\}} \{d_m(\chi)\}^{\frac{2s_i}{2s_i-1}}
\\ &&\leq\sum_{\substack{p\leq x \\ p \equiv 1 \Mod{m}}} \sum_{\chi \in \widehat{\F_p^*}\setminus\{\chi_0\}}
 d_m(\chi)\{d_m(\chi)\}^{\frac{1}{2s_i-1}}
\\ &&\leq \sum_{\substack{p\leq x\\ p\equiv 1\Mod{m}}}\{D_m(p)\}^{\frac{1}{2s_i-1}} \sum_{\chi \in \widehat{\F_p^*}\setminus\{\chi_0\}}d_m(\chi)
\\&&\leq \sum_{\substack{p\leq x \\ p \equiv 1\Mod{m}}}\{D_m(p)\}^{\frac{1}{2s_i-1}}2^{\omega(\frac{p-1}{m})}
\\ && \le\frac1{m} \sum_{\substack{p\leq x \\ p \equiv 1\Mod{m}}}
\prod_{\ell\mid\frac{p-1}{m}}\left(1+\frac1{\ell}\right)\, 2^{\omega(\frac{p-1}{m})}
\\&&\ll \frac1{m} \sum_{\substack{p\leq x \\ p \equiv 1\Mod{m}}}
\prod_{\ell\mid\frac{p-1}{m}}\left(1-\frac1{\ell}\right)^{-1} 2^{\omega(\frac{p-1}{m})}
\\ && \ll \frac{\log\log x}{m}   \sum_{\substack{p\leq x \\ p \equiv 1\Mod{m}}}\tau\left(\frac{p-1}{m}\right)
\ll \frac{x \log \log x}{m^{2}}\;.
\end{eqnarray*}
To estimate the other term in (\ref{Holder}) we use Lemma 5 in \cite{St}:
$$
\sum_{\substack{p\leq x \\ p \equiv 1 \Mod{m}}} \sum_{\chi_i\in \widehat{\F_p^*}\setminus\{\chi_0\}}
\left| \sum_{\substack{a \in \Z \\ 0<a\leq T_i}} \chi_i(a) \right|^{2s_i}
\ll (x^2+{T_i}^{s_i}){T_i}^{s_i} (\log(e{T_i}^{s_i-1}))^{{s_i}^2-1}\;.
$$
So, for every positive constant $M>1$, we find
\begin{eqnarray*}
\frac{1}{|\Tv|} \sum_{\substack{\av\in\Z^r \\ 0<a_1\le T_1\\ \vdots \\ 0<a_r\le T_r}} N_{\langle \av \rangle,m}(x) &=&C_{r,m} \Li(x) +
O\left(\frac{x}{m^r(\log x)^M}\right)\\&&+ O\left(\sum_{i=1}^r \frac{x}{T_i\log x}\right) + E_{r,m}(x)\;,
\end{eqnarray*}
with
$$
E_{r,m}(x)\ll \sum_{i=1}^r \frac{1}{T_i}\left[\left(\frac{x\log\log x}{m^{2}}\right)^{2s_i-1} (x^2+{T_i}^{s_i}){T_i}^{s_i}
(\log(e{T_i}^{s_i-1}))^{{s_i}^2-1}\right]^{\frac1{2s_i}}\;.
$$
If we choose $s_i= \left\lfloor\frac{2\log x}{\log T_i}\right\rfloor+1$ for $i=1,\dots,r$, then $T_i^{s_i-1}\le x^2 < T_i^{s_i}$ and
$$
E_{r,m}(x) \ll  \frac1{m}\sum_{i=1}^r(x\log\log x)^{1-\frac1{2s_i}} (\log(e{x^2}))^{\frac{{s_i}^2-1}{2s_i}}\;.
$$
Now, if $T_i>x^2$ for all $i=1,\dots,r$, then $s_1=\dots=s_r=1$ and
$$
E_{r,m}(x)\ll \frac1{m} (x\log\log x)^{1/2}\;;
$$
in particular, we have $E_{r,m}(x)\ll x/(\log x)^M$ for every constant $M>1$.
Otherwise, if $T_j\le x^2$ for some $j\in\{1,\dots,r\}$, then $s_j\ge 2$ and the corresponding contribution to $E_{r,m}(x)$ will be
$$
E^j_{r,m}(x)\ll \frac1{m} (x\log\log x)^{1-\frac1{2s_j}} (\log(e{x^2}))^{\frac{3\log x}{2\log T_j}}\;.
$$
%Defining $T^*:=\min_{1\le i\le r}\{T_i:T_i\le x^2\}$, then
By hypothesis
\begin{equation} \label{Tmin}
T^* > \exp(4(\log x \log\log x)^{1/2})
\end{equation}
and, through  computations similar to those in \cite{St} (page 184), we can derive the following estimate: 
\begin{align}
\nonumber
E_{r,m}(x) %\ll m^{-\frac{r+2}{2}}\sum_{i=1}^r x^{1-\frac1{2s_i}}\log\log x (\log(e{x^2}))^{\frac{3\log x}{2\log T^*}}
\ll \frac1{m} \,x \log\log x (T^*)^{-\frac1{16}} \;.
\end{align}
Also in this case, using (\ref{Tmin}), we have $E_{r,m}(x)\ll x/(\log x)^M$ for every $M>1$.
This ends the proof of Theorem~\ref{firstorder}. %\hspace{15cm}
\;\;\;\;\;\;\;\;$\Box$

%%%%%%%%%%%%%%%%%%%%%%%%%%%%%%%%%%%%%%%%%%%%%%%%%%%%%%%%%%%%%%%%%%%%%%%%%%%%%%%%%%%%%%%%%%%%%%%%%%%%%%%%%%%%%%%%%%

\section{Proof of Theorem $2$}
%In order to prove the second statement of the Theorem, w
We now consider 
\begin{eqnarray*}
&H:&=\frac{1}{|\Tv|}\sum_{\substack{\av\in\Z^r\\ 0<a_1\le T_1\\ \vdots \\ 0<a_r\le T_r}}
\left\{N_{\langle \av\rangle,m}(x)-C_{r,m}\operatorname{Li}(x)\right\}^2.
\end{eqnarray*}
\begin{eqnarray*}
&&\sum_{\substack{\av\in\Z^r\\ 0<a_1\le T_1\\ \vdots \\ 0<a_r\le T_r}}
\left\{N_{\langle \av\rangle,m}(x)-C_{r,m}\operatorname{Li}(x)\right\}^2 \\
&&\leq \sum_{\substack{p,q\leq x \\ p,q \equiv 1 \Mod{m}}}M^m_{p,q}(\Tv)- 2C_{r,m}\operatorname{Li}(x)\sum_{\substack{p\leq x \\ p \equiv 1 \Mod{m}}} M^m_p(\Tv)+ |\Tv|  (C_{r,m})^2 \operatorname{Li}^2(x) \;,
\end{eqnarray*}
where $M^m_{p,q}(\Tv)$ denotes the number of $r$-tuples $\av \in \Z^r$, with $a_i \leq T_i$ and $v_p(a_i)=v_q(a_i)=0$
for each $i=1,\dots,r$, whose reductions modulo $p$ and $q$ satisfy $[\F_p^*:\langle \av \rangle_p]=[\F_q^*:\langle \av \rangle_q]=m$. 
%and $[\F_q^*:\langle \av \rangle_q]=m$ simultaneously.
From Theorem~\ref{firstorder} we obtain 
\begin{align} \nonumber
H&\leq \frac1{|\Tv|}\sum_{\substack{p,q\leq x \\ p,q \equiv 1 \Mod{m}}}M^m_{p,q}(\Tv)-(C_{r,m})^2 \operatorname{Li}^2(x)+ O\left(  \frac{x^2}{(\log x)^{M'}}\right)\;,
\end{align}
for every constant $M'>2$.
If we  write
$$
\sum_{\substack{p,q\leq x \\ p,q \equiv 1 \Mod{m}}}M^m_{p,q}(\Tv)=\sum_{\substack{p\leq x \\ p \equiv 1 \Mod{m}}}M^m_p(\Tv)+\sum_{\substack{p,q\leq x \\ p,q \equiv 1 \Mod{m} \\ p\neq q}}M^m_{p,q}(\Tv)\;,
$$
Theorem~\ref{firstorder} gives, for arbitrary $M>1$,
$$
\sum_{\substack{p\leq x \\ p \equiv 1 \Mod{m}}}M^m_p(\Tv) =
C_{r,m} |\Tv| \operatorname{Li}(x)+O\left(\frac{|\Tv|x}{(\log x)^M}\right)\;.%\qquad M>1.
$$
%where $M>1$.
In the same spirit as in the proof Theorem~\ref{firstorder}, we use equation (\ref{tpm}) to deal with the following sum
\begin{eqnarray*}
&&\sum_{\substack{p,q\leq x \\ p,q \equiv 1 \Mod{m} \\ p\neq q}}M^m_{p,q}(\Tv)\\&&=\sum_{\substack{p,q\leq x \\ p,q \equiv 1 \Mod{m} \\ p\neq q}}
\sum_{\substack{\av\in\Z^r \\ 0<a_1\le T_1\\ \vdots \\ 0<a_r\le T_r}}t_{p,m}(\av) t_{q,m}(\av)
\\ \label{doublesum}
&&=\sum_{\substack{p,q\leq x \\ p,q \equiv 1 \Mod{m} \\ p\neq q}}
\sum_{\chiv_1 \in (\widehat{\F_p})^r} \sum_{\chiv_2 \in (\widehat{\F_q})^r} c_m(\chiv_1)c_m(\chiv_2)
\sum_{\substack{\av\in\Z^r \\ 0<a_1\le T_1\\ \vdots \\ 0<a_r\le T_r}}\chiv_1(\av) \chiv_2(\av)\;.
\end{eqnarray*}\\
Therefore
$$\sum_{\substack{p,q\leq x \\ p,q \equiv 1 \Mod{m}}}M^m_{p,q}(\Tv)= H_1+2H_2+H_3+O(|\Tv| \operatorname{Li}(x))\;,
$$
where $H_1, H_2, H_3$ are the contributions to the sum (\ref{doublesum}) 
when $\chiv_1=\chiv_2=\chiv_0$, only one between $\chiv_1$ and $\chiv_2$
is equal to $\chiv_0$, neither $\chiv_1$ nor $\chiv_2$ is $\chiv_0$, respectively.
%$H_1$ will be the main term, in this case $\chiv_1,\chiv_2=\chiv_0$.
%We define $[\Tv]= \left[\prod_{i=1}^rT_i\right]$.
First we deal with the inner sum in $H_1$. To avoid confusion, we set $\chiv^p_0$ and $\chiv^q_0$ as the $r$-tuples whose all entries are
principal characters modulo ${p}$ and modulo ${q}$ respectively, so that
 \begin{align}
 \nonumber
\sum_{\substack{\av\in\Z^r \\ 0<a_1\le T_1\\ \vdots \\ 0<a_r\le T_r}}\chiv^p_0(\av) \chiv^q_0(\av)=&
\prod_{i=1}^r\left\{\lfloor T_i \rfloor - \left\lfloor\frac{T_i}{p}\right\rfloor -\left\lfloor\frac{T_i}{q}\right\rfloor
+ \left\lfloor\frac{T_i}{pq}\right\rfloor \right\}
%\lfloor\Tv\rfloor -
%\sum_{i=1}^r\left\lfloor\frac{T_i}{p}\right\rfloor \prod_{\substack{j=1\\ j\neq i}}^r \left(\lfloor T_j\rfloor-\left\lfloor\frac{T_i}{p}\right\rfloor\right)-
%\sum_{i=1}^r\left\lfloor\frac{T_i}{q}\right\rfloor \prod_{\substack{j=1\\ j\neq i}}^r \left(\lfloor T_j\rfloor-\left\lfloor\frac{T_i}{q}\right\rfloor\right)
%\\ \nonumber
%&+\sum_{i=1}^r\left\lfloor\frac{T_i}{pq}\right\rfloor \prod_{\substack{j=1\\ j\neq i}}^r \left(\lfloor T_j\rfloor-\left\lfloor\frac{T_i}{pq}\right\rfloor\right)
%=[\Tv]\left(1+O\left(\sum_{i=1}^r \frac1{T_i}\right)\right)
\;.
\end{align}
%Then, applying  %the lower bound $T_i>\exp((13\log x\log\log x)^{1/2})$, $i=1,\dots,r$, together with
Using Lemma \ref{MainTerm}, with $M'>2$ arbitrary constant:
\begin{eqnarray*}
&&H_1=\sum_{\substack{p,q\leq x \\ p,q \equiv 1 \Mod{m} \\ p\neq q}} c_m(\chiv^p_0)c_m(\chiv^q_0) \sum_{\substack{\av\in\Z^r \\ 0<a_1\le T_1\\ \vdots \\ 0<a_r\le T_r}}\chiv^p_0(\av) \chiv^q_0(\av)
\\&=&|\Tv|\sum_{\substack{p,q\leq x \\ p,q \equiv 1 \Mod{m} \\ p\neq q}} c_m(\chiv^p_0)c_m(\chiv^q_0)
\left(1-\frac{r}{p}-\frac{r}{q}+ \dots+\frac1{(pq)^r}+\sum_{i=1}^r O\left( \frac1{T_i}\right)\right)
%\sum_{\substack{p,q\leq x \\ p,q \equiv 1 \Mod{m} \\ p\neq q}} c_m(\chiv^p_0)c_m(\chiv^q_0)|\Tv|\left(1+O\left(\sum_{i=1}^r \frac1{T_i}\right)\right)
%\\ \nonumber
%&=|\Tv|\left(1+O\left(\sum_{i=1}^r \frac1{T_i}\right)\right)\sum_{\substack{p\leq x \\ p \equiv 1 \Mod{m} \\ p\neq q}}c_m(\chiv^p_0)\sum_{\substack{q\leq x \\ q \equiv 1 \Mod{m} \\ q\neq p}}c_m(\chiv^q_0)
%\\ \nonumber
%&=|\Tv|\left(1+O\left(\sum_{i=1}^r \frac1{T_i}\right)\right)\sum_{\substack{p\leq x \\ p \equiv 1 \Mod{m}}}c_m(\chiv^p_0)\left(\left(\sum_%{\substack{q\leq x \\ q \equiv 1 \Mod{m} }}c_m(\chiv^q_0)\right)-c_m(\chiv^p_0)\right)
%\\ \nonumber
%&=|\Tv|(1+O(\sum_{i=1}^r \frac1{T_i}))\sum_{\substack{p\leq x \\ p \equiv 1 \Mod{m}}}c_m(\chiv^p_0)(S_m(x)-c_m(\chiv^p_0))
%\\ \nonumber
%&=|\Tv|(1+O(\sum_{i=1}^r \frac1{T_i}))\left( S_m(x)\sum_{\substack{p\leq x \\ p \equiv 1 \Mod{m}}}c_m(\chiv^p_0)- \sum_{\substack{p\leq x \\ p \equiv 1 \Mod{m}}}(c_m(\chiv^p_0))^2\right)
\\ \nonumber
&=&|\Tv|\left( \left(\sum_{\substack{p\leq x \\ p \equiv 1 \Mod{m}}} c_m(\chiv_0)\right)^2- \sum_{\substack{p\leq x \\ p \equiv 1 \Mod{m}}}(c_m(\chiv^p_0))^2\right)\\ &&\times \left(1+O\left(\frac1{T^*}\right)\right)+|\Tv|O\left( \frac{x \log\log x}{\log x}\right)
\\ &=&|\Tv|\left( S_m^2(x)+ O\left(\frac{x^2}{T^*(\log x)^2}\right)+O\left( \frac{x \log\log x}{\log x}\right)\right)
%\\ \nonumber&
%\\ \nonumber
%&=|\Tv|\left( (C_{r,m} \Li(x))^2+ O\left(\frac{C_{r,m} \Li(x)}{m^r}\right)+O\left(\frac{1}{m^{2r}} \frac{x^2}{(\log x)^{2M}}\right) +O\left(C_{r,m} \Li(x)\frac{1}{m^{r}} \frac{x}{(\log x)^{M}} \right)\right)
\\&=&|\Tv|\left(C^2_{r,m} \Li^2(x)+O\left( \frac{x^2}{m^r(\log x)^{M'}} \right)\right)\;.
\end{eqnarray*}
Focuse now on $H_2$ and assume without loss of generality that $\chiv_1=\chiv_0\neq\chiv_2$:
\begin{align} \nonumber
H_2&= \sum_{\substack{p,q\leq x \\ p,q \equiv 1 \Mod{m} \\ p\neq q}} \sum_{\substack{\chiv_2 \in (\widehat{\F_q^*})^r\setminus\{\chiv_0^q\}}}
c_m(\chiv^p_0)c_m(\chiv_2) \sum_{\substack{\av\in\Z^r \\ 0<a_1\le T_1\\ \vdots \\ 0<a_r\le T_r}} \chiv^p_0(\av)\chiv_2(\av)
\\ \nonumber
&= \sum_{\substack{p\leq x \\ p \equiv 1 \Mod{m}}}c_m(\chiv^p_0)\sum_{\substack{q\leq x \\ q \equiv 1 \Mod{m} \\ q\neq p}} \sum_{\substack{\chiv_2 \in (\widehat{\F_q^*})^r\setminus\{\chiv_0^q\}}}
c_m(\chiv_2) \sum_{\substack{\av\in\Z^r \\ 0<a_1\le T_1\\ \vdots \\ 0<a_r\le T_r\\ p\nmid \prod_{i=1}^r a_i}} \chiv_2(\av)\;.
\end{align}
Identically to what was done in the proof of Theorem 1, the quantity
$$
U_2:=\sum_{\substack{q\leq x \\ q \equiv 1 \Mod{m} }} \sum_{\substack{\chiv_2 \in (\widehat{\F_q^*})^r\setminus\{\chiv_0^q\}}}
\left|c_m(\chiv_2) \sum_{\substack{\av\in\Z^r \\ 0<a_1\le T_1\\ \vdots \\ 0<a_r\le T_r}} \chiv_2(\av)\right|
$$
can be estimated through Holder's inequality combined with the large sieve inequality, to get $U_2 \ll x/(\log x)^M$ for any constant $M>1$. 
Moreover, Lemma \ref{tau_sum} gives an upper bound for the following quantity:
\begin{eqnarray*}
V_2&:=&\sum_{\substack{q\leq x \\ q \equiv 1 \Mod{m} }} \sum_{\substack{\chiv_2 \in (\widehat{\F_q^*})^r\setminus\{\chiv_0^q\}}}
\left|c_m(\chiv_2) \sum_{\substack{\av\in\Z^r \\ 0<a_1\le T_1\\ \vdots \\ 0<a_r\le T_r \\ p\mid\prod_{i=1}^r a_i}} \chiv_2(\av)\right|
\\ &\ll& \frac{|\Tv|}{p^r}\sum_{\substack{q\leq x\\ q \equiv 1 \Mod{m} }}
\sum_{\substack{\chiv_2 \in(\widehat{\F_q^*})^r\setminus\{\chiv_0^q\}}}|c_m(\chiv_2)|
\\ &\ll& \frac{|\Tv|}{p^r} \sum_{\substack{q\leq x \\ q \equiv 1 \Mod{m} }}\tau\left(\frac{q-1}{m}\right)
\ll \frac{|\Tv|x}{p^r m}\;.
\end{eqnarray*}
%\begin{align}
%H_{2,p}\leq \sum_{\substack{q\leq x \\ q \equiv 1 \Mod{m}}} \sum_{\substack{\chiv_2 \Mod q\\ \chiv_2 \neq \chiv_0}}|c_m(\chiv_2) \sum_{\substack{\av\in\Z^r \\ 0<a_i\le T_i}} \chiv_2(\av)| +\sum_{\substack{q\leq x \\ q \equiv 1 \Mod{m}}}\sum_{\substack{\chiv_2 \Mod q\\ \chiv_2 \neq \chiv_0}}|c_m(\chiv_2) \sum_{\substack{\av\in\Z^r \\ 0<a_i\le T_i\\ \exists i, p\mid a_i}} \chiv_2(\av)\;.
%\end{align}
%Again, we use Holder's inequality and Lemma 5 in \cite{St} to write
%$$
%\sum_{\substack{q\leq x \\ q \equiv 1 \Mod{m}}} \sum_{\substack{\chiv_2 \Mod q\\ \chiv_2 \neq \chiv_0}}|c_m(\chiv_2) \sum_{\substack{\av\in%\Z^r \\ 0<a_i\le T_i}} \chiv_2(\av)| \leq \Tv \frac{x}{(m^r \log x)^M}
%$$
%and
%\begin{align}
%\sum_{\substack{q\leq x \\ q \equiv 1 \Mod{m} }} \sum_{\substack{\chiv_2 \Mod q\\ \chiv_2 \neq \chiv_0}}|c_m(\chiv_2) \sum_{\substack{\av\in\Z^r \\ 0<a_i\le T_i\\ \exists i, p\mid a_i}} \chiv_2(\av)&\ll \sum_{\substack{q\leq x \\ q \equiv 1 \Mod{m} }} \sum_{\substack{\chiv_2 \Mod q\\ \chiv_2 \neq \chiv_0}}|c_m(\chiv_2)| \frac{r\Tv}{p}\ll \Tv \frac{r}{p}\sum_{\substack{q\leq x \\ q \equiv 1 \Mod{m} }}\frac1{m^r}\tau(\frac{q-1}{m})
%\\ \nonumber
%&\ll |\Tv| \frac{r}{p}\frac{x}{m^{r+1}}
%\end{align}
%Then
Thus, for every constant $M'>2$,
$$
H_{2} \leq \sum_{\substack{p\leq x \\ p \equiv 1 \Mod{m}}} (U_2+V_2)
\ll\frac{|\Tv|x^2}{ (\log x)^{M'}}\;.
$$
Finally, assume $\chi_1 \in \widehat{\F_p^*}\setminus\{\chi_0^p\}$ and $\chi_2 \in \widehat{\F_q^*}\setminus\{\chi_0^q\}$,
with $p\neq q$, then $\chi_1\chi_2$ is a primitive character modulo $pq$.
Given
$$
H_3= \sum_{\substack{p,q\leq x \\ p,q \equiv 1 \Mod{m} \\ p\neq q}} \sum_{\substack{\chiv_1 \in (\widehat{\F_p^*})^r\setminus\{\chiv_0^p\}}}
\sum_{\substack{\chiv_2 \in (\widehat{\F_q^*})^r\setminus\{\chiv_0^q\}}}
c_m(\chiv_1)c_m(\chiv_2) \sum_{\substack{\av\in\Z^r \\ 0<a_1\le T_1\\ \vdots \\ 0<a_r\le T_r}} \chiv_1(\av)\chiv_2(\av)
$$
we will apply again Holder's inequality and the large sieve (Lemma 5 in \cite{St}) to obtain an upper bound.
In order to do that, since the $r$-tuples of characters, $\chiv_1$ and $\chiv_2$, appearing in $H_3$ are both non-principal,
we indicate with $\chi_{1,i}$ the $i$-th component of the $r$-tuple $\chiv_1$ of Dirichlet characters to the modulus $p$
(similarly for $\chi_{2,i}$). Then the contributions to $H_3$ have two possible sources: a ``diagonal'' term $H_3^d$
(in which for a certain $i\in\{1,\dots,r\}$ both $\chi_{1,i}$ and $\chi_{2,i}$ are non-principal)
and a ``non-diagonal'' term $H_3^{nd}$ (in which for none of the indices $i\in\{1,\dots,r\}$ is possible to have
$\chi_{1,i}$ and $\chi_{2,i}$ both non-principal). 
Explicitly, $H_3^d = \sum_{i=1}^r H_{3,i}$, where
%\begin{eqnarray*}
%&H_{3,i}:=&  
\begin{align} \nonumber
H_{3,i}&:=
\sum_{\substack{p,q\leq x \\ p,q \equiv 1 \Mod{m} \\ p\neq q}}
\sum_{\substack{\chiv_1 \in (\widehat{\F_p^*})^r\\ \chi_{1,i}\in \widehat{\F_p^*}\setminus\{\chi_0^p\}}}
\sum_{\substack{\chiv_2 \in (\widehat{\F_q^*})^r \\ \\ \chi_{2,i}\in \widehat{\F_q^*}\setminus\{\chi_0^q\}}}
c_m(\chiv_1)c_m(\chiv_2) \sum_{\substack{\av\in\Z^r \\ 0<a_1\le T_1\\ \vdots \\ 0<a_r\le T_r}} \chiv_1(\av)\chiv_2(\av)
\\ \nonumber
%\end{eqnarray*}
%\begin{eqnarray*}
&\leq \frac{|\Tv|}{T_i}
\sum_{\substack{p,q\leq x \\ p,q \equiv 1 \Mod{m} \\ p\neq q}}\sum_{\chi_{1,i}\in \widehat{\F_p^*}\setminus\{\chi_0^p\}}
\sum_{\chi_{2,i}\in \widehat{\F_q^*}\setminus\{\chi_0^q\}}d_m(\chi_{1,i})d_m(\chi_{2,i})
\\ \nonumber & \qquad \times \left| \sum_{0<a_i\leq T_i}\chi_{1,i}(a_i)\chi_{2,i}(a_i)\right| 
%\prod_{\substack{j=1 \\j\neq i}}^r \left\{\lfloor T_j \rfloor- \left\lfloor\frac{T_j}{p}\right\rfloor -
%\left\lfloor\frac{T_j}{q}\right\rfloor + \left\lfloor\frac{T_j}{pq}\right\rfloor \right\}
\end{align}

and $H_3^{nd}=\sum_{\substack{i,j=1 \\ i\neq j}}^rH_{3,ij}$, with
\begin{align}
\nonumber
H_{3,ij}:= & \sum_{\substack{p,q\leq x \\ p,q \equiv 1 \Mod{m} \\ p\neq q}}
\sum_{\substack{\chiv_1 \in (\widehat{\F_p^*})^r \\ \chi_{1,i}\in \widehat{\F_p^*}\setminus\{\chi_0^p\}}}
\sum_{\substack{\chiv_2 \in (\widehat{\F_q^*})^r \\ \\ \chi_{2,j}\in \widehat{\F_q^*}\setminus\{\chi_0^q\}}}
c_m(\chiv_1)c_m(\chiv_2) \sum_{\substack{\av\in\Z^r \\ 0<a_1\le T_1\\ \vdots \\ 0<a_r\le T_r}} \chiv_1(\av)\chiv_2(\av)
\\ \nonumber
& \leq  \frac{|\Tv|}{T_i T_j}
\sum_{\substack{p,q\leq x \\ p,q \equiv 1 \Mod{m} \\ p\neq q}}
\sum_{\chi_{1,i}\in \widehat{\F_p^*}\setminus\{\chi_0^p\}}\sum_{\chi_{2,j}\in \widehat{\F_q^*}\setminus\{\chi_0^q\}}
d_m(\chi_{1,i})d_m(\chi_{2,j}) 
\\ \nonumber 
& \qquad \times 
\left| \sum_{\substack{0<a_i\leq T_i \\ 0<a_j\leq T_j}}\chi_{1,i}(a_i)\chi_{2,j}(a_j)\right| \;.
\end{align}
Dealing first with $H_{3,i}$, we use again Holder's inequality together with the large sieve to get
\begin{eqnarray*}
\frac{H_{3,i}}{|\Tv|} &\ll&
\frac1{T_i}\left\{\sum_{\substack{p,q\leq x \\ p,q \equiv 1 \Mod{m} \\ p\neq q}}
\sum_{\substack{\chi_{1,i}\in \widehat{\F_p^*}\setminus\{\chi_0^p\} \\ \chi_{2,i}\in \widehat{\F_q^*}\setminus\{\chi_0^q\}}}
[d_m(\chi_{1,i})d_m(\chi_{2,i})]^{\frac{2s_i}{2s_i-1}}\right\}^{\frac{2s_i-1}{2s_i}}\\ &&\times
\left\{\sum_{\substack{p,q\leq x \\ p,q \equiv 1 \Mod{m} \\ p\neq q}} \sum_{\eta\Mod{pq}}
\left| \sum_{0<a_i\leq T_i}\eta(a_i)\right|^{2s_i}\right\}^{\frac1{2s_i}}\\
\nonumber
&\ll& \frac1{T_i}\left\{ \left(\frac{x\log\log x}{m^{2}}\right)^{4s_i-2}(x^4+T_i^{s_i})T_i^{s_i}
(\log(eT_i^{s_i-1}))^{s_i^2-1}\right\}^{\frac1{2s_i}}\;.
\end{eqnarray*}
We now choose $s_i=\left\lfloor \frac{4\log x}{\log T_i}\right\rfloor+1$, so that $T_i^{s_i-1}\le x^4 \le T_i^{s_i}$ and
$$
\frac{H_{3,i}}{|\Tv|} \ll \frac1{m^2}\,x^{2-\frac1{s_i}}(\log \log x)^2(\log(ex^{4}))^{\frac{s_i^2-1}{2s_i}}\;.
$$
If $T_i>x^4$ then $s_i=1$ and $H_{3,i}/|\Tv|\ll x (\log \log x)^2$.
Otherwise, if $T_i\le x^4$ then $s_i\ge 2$ and assuming by hypothesis $T_i>\exp(6(\log x \log\log x)^{1/2})$, 
similarly to what was done to prove Theorem 1 we get
$$
\frac{H_{3,i}}{|\Tv|} \ll x^{2-\frac1{s_i}}(\log \log x)^2(\log(ex^{4}))^{\frac{3\log x}{\log T_i}} \ll
%\frac{(x\log \log x)^2}{T_i^{1/32}} \ll
\frac{x^2}{(\log x)^{D}}\;,
$$
for any positive constant $D>2$.
It remains to estimate $H_{3,ij}$, where $i\neq j$: it can be factorized in two products
and, through the same methods used with (\ref{S2i}), we have
\begin{eqnarray*}
\frac{H_{3,ij}}{|\Tv|}&\ll &
\frac1{T_iT_j} \sum_{\substack{p\leq x \\ p \equiv 1 \Mod{m} }} \sum_{\chi_{1,i}\in \widehat{\F_p^*}\setminus\{\chi_0^p\}}
d_m(\chi_{1,i}) \left| \sum_{0<a_i\leq T_i} \chi_{1,i}(a_i) \right|\\ &&\times
\sum_{\substack{q\leq x \\ q \equiv 1 \Mod{m} }} \sum_{\chi_{2,j}\in \widehat{\F_q^*}\setminus\{\chi_0^q\}}
d_m(\chi_{2,j}) \left| \sum_{0<a_j\leq T_j} \chi_{2,j}(a_j) \right|
\\ &\ll& \frac1{T_i}\left\{ \left(\frac{x\log\log x}{m^{2}}\right)^{2s_i-1}(x^2+T_i^{s_i})T_i^{s_i}
(\log(eT_i^{s_i-1}))^{s_i^2-1}\right\}^{\frac1{2s_i}} \\ && \times
\frac1{T_j}\left\{ \left(\frac{x\log\log x}{m^{2}}\right)^{2s_j-1}(x^2+T_j^{s_j})T_j^{s_j}
(\log(eT_j^{s_j-1}))^{s_j^2-1}\right\}^{\frac1{2s_j}}\;.
\end{eqnarray*}
%Similarly to what was done to estimate the error term (\ref{S2i}), 
We choose $s_i=\left\lfloor \frac{2\log x}{\log T_i}\right\rfloor+1$ and
$s_j=\left\lfloor \frac{2\log x}{\log T_j}\right\rfloor+1$, so that
$$
\frac{H_{3,ij}}{|\Tv|}\ll \frac{x^2}{(\log x)^E}
$$
for every constant $E>2$.

Eventually, since $H_3 \leq H_3^{d}+H_3^{nd}$, summing the  upper bounds for $H_1$, $H_2$ and $H_3$
we get the proof of Theorem~\ref{secondorder}. 
%\hspace{15cm}
\;\;\;\;\;\;\;\;$\Box$

\subsection*{Acknowledgements}
The results in this manuscript are part of the Doctoral dissertation of the two authors.
The authors would like to thank Prof. Francesco Pappalardi for inspiring this work and for the precious suggestions 
about technical difficulties concerning the proofs of the Lemmas and Theorems.

.

\end{document}